\newcommand{\on}{\operatorname}
\newcommand{\cat}[1]{\ensuremath{\mathsf{#1}}}
\newcommand{\rcatMod}{\operatorname{Mod-}}
\newcommand{\rGr}{\operatorname{Gr-}\hskip -2pt}
\newcommand{\rGrb}{\operatorname{Gr_+ -}\hskip -2pt}
\newcommand{\wh}{\widehat}
\DeclareMathOperator{\soc}{soc}
\DeclareMathOperator{\rad}{rad}
\newcommand{\mb}{\mathbb}
\newcommand{\N}{\mathbb{N}}
\newcommand{\C}{\mathcal{C}}
\newcommand{\Z}{\mathbb{Z}}
\newcommand{\A}{\mathcal{A}}
\newcommand{\K}{\mathcal{K}}
\newcommand{\LL}{\mathcal{L}}
\newcommand{\OO}{\mathcal{O}}
\newcommand{\beq}{\begin{equation}}
\newcommand{\eeq}{\end{equation}}
\DeclareMathOperator{\id}{id}
\DeclareMathOperator{\Hom}{Hom}
\newcommand{\End}{{\rm End}}
\DeclareMathOperator{\Ext}{Ext}
\newcommand{\wt}{\widetilde}
\DeclareMathOperator{\rgr}{gr-\!} \DeclareMathOperator{\lgr}{\!-gr} \DeclareMathOperator{\rmod}{mod-\!} \DeclareMathOperator{\lmod}{\!-mod}  
\DeclareMathOperator{\lMod}{\!-Mod} 
\DeclareMathOperator{\rMod}{Mod-\!}
\newcommand{\op}{\ensuremath{^\mathrm{op}}}
\newcommand{\separate}{\bigskip}
\newcommand{\coker}{\operatorname{coker}}
\numberwithin{equation}{section}
 \theoremstyle{plain}
\newtheorem{theorem}[equation]{Theorem}
\newtheorem{lemma}[equation]{Lemma}
\newtheorem{corollary}[equation]{Corollary}
\newtheorem*{corollary*}{Corollary}
\newtheorem{proposition}[equation]{Proposition}
\theoremstyle{definition}
\newtheorem{definition}[equation]{Definition}
\newtheorem{remark}[equation]{Remark}
\newtheorem{standing-hypothesis}[equation]{Standing Hypothesis}
\newtheorem{example}[equation]{Example}
\begin{document}

\title{The domain and prime properties for Koszul rings and algebras}

\author{Manuel L. Reyes}
\address{University of California,  Irvine\\ Department of Mathematics\\
340 Rowland Hall\\ Irvine, CA 92697-3875\\ USA}
\email{mreyes57@uci.edu}

\author{Daniel Rogalski}
\address{University of California, San Diego\\ Department of Mathematics\\ 9500 Gilman Dr. \#0112 \\
La Jolla, CA 92093-0112\\ USA}
\email{drogalski@ucsd.edu}

\date{July 17, 2024}

\thanks{This work was supported in part by NSF grant DMS-2201273.  This material is based upon work supported by the National Science Foundation under Grant No.\ DMS-1928930 and by the Alfred P. Sloan Foundation under grant G-2021-16778, while the second author was in residence at the Simons Laufer Mathematical Sciences Institute (formerly MSRI) in Berkeley, California, during the Spring 2024 semester.}
\keywords{Prime ring, piecewise domain, Ext algebra, orbital algebra, Artin-Schelter regular, twisted Calabi-Yau, Koszul Frobenius algebra}
\subjclass[2010]{
Primary:
16E65, 
16N60, 
16S37, 
16U10; 
Secondary: 
16E30, 
16S99
}

\begin{abstract}
We establish a technique to prove that a Koszul graded ring is prime or a domain using information about its Koszul dual. This is based on a general categorical result that expands on methods of J.\,Y.~Guo, which proves that certain \emph{orbital rings} are prime or domains.
We apply this method to prove that if $A = kQ/I$ is a Koszul twisted Calabi-Yau algebra of dimension~2, such that $Q$ is connected with every vertex having outdegree at least~2, then $A$ is a prime piecewise domain. In particular, the preprojective algebra of a connected quiver whose underlying graph has minimum degree at least~2 is a prime piecewise domain.
\end{abstract}

\maketitle


\section{Introduction}

It is generally expected that a ring with good homological properties should enjoy other desirable ring-theoretic properties. 
Our inspiration for this paper is the following general question: What homological properties are sufficient to ensure that a ring is prime or a domain? 
A famous and exemplary result in this vein is the fact that a commutative noetherian local ring of finite global dimension (i.e., a regular local ring) is a unique factorization domain~\cite{AuslanderBuchsbaum}. 

As one might imagine, it has proved more difficult to deduce such elementwise properties from homological conditions for noncommutative rings and algebras. For instance, it is still an open question whether every noetherian local ring of finite global dimension is a domain. As a result, one must typically invoke further assumptions in order to prove such a structure theorem in the noncommutative case. Some successful examples in the literature are:
\begin{itemize}
    \item \cite{BHM} If $R$ is noetherian and is simple artinian modulo its Jacobson radical, then the quotient of $R$ by its nilradical is isomorphic to a matrix ring over a local domain.
    \item \cite{Ramras, Snider} With the same hypotheses as in the preceding result, if $R$ has global dimension at most $3$, then its nilradical is zero and thus $R$ is a matrix ring over a local domain. 
    \item \cite{Levasseur} If a connected graded noetherian algebra $A$ is Auslander regular, then $A$ is a domain.
    \item \cite{StaffordZhang} If a connected graded ring $R$ is PI and has finite global dimension, then $R$ is a domain. 
    \item \cite{Teo} A fully bounded noetherian ring of finite global dimension that is simple artinian modulo its Jacobson radical is isomorphic to a matrix ring over a local domain.
\end{itemize}
Note that matrix rings over domains are prime rings that satisfy the additional property of being a piecewise domain due to Gordon and Small in~\cite{GordonSmall:piecewise}. We recall the definition of this property in Section~\ref{sec:orbital}.

\separate

Our goal in this work is to provide new homological conditions that, while more subtle than the statements above, are sufficient to guarantee that a ring or algebra is prime or a domain. Notably, we do not place restrictions on the global dimension the ring, nor do we assume that it is noetherian.

We focus on Koszul algebras and the more general Koszul rings of~\cite{BGS},
whose definition we recall in Section~\ref{sec:Koszul modules}.  Such a ring is $\mb{N}$-graded, the degree zero part $S = R_0$ is semisimple, and the Ext algebra $\Ext^\bullet_{R \lMod}(S,S)$ is isomorphic to a quadratic dual ring. We refine a technique due to J.\, Y.~Guo~\cite{Guo:prime} to provide sufficient conditions for a Koszul algebra $R$ to be prime or a domain. (For comments about how our results relate to those in \cite{Guo:prime}, see the end of Section~\ref{sec:application}.)  These criteria are formulated in terms of $\Lambda = \Ext^\bullet_{R \lMod}(S,S)$, and they involve the following condition. If $M$ is a graded right $\Lambda$-module, we let $\Omega^i(M)$ denote its $i$th syzygy module. In case $M$ is Koszul, this syzygy is generated in degree~$i$, so that the graded shift $\Omega^i(M)(i)$ is generated in degree zero.

\begin{definition}
\label{def:Koszulsyzygy}
Let $\Lambda$ be a Koszul ring, denote $S = \Lambda_0$, and fix a decomposition $S = S_1 \oplus \cdots \oplus S_r$ into graded simple right modules. We say that $\Lambda$ satisfies the \emph{Koszul syzygy condition} if, for every integer $i \geq 0$, all $1 \leq j, \ell \leq r$, and every graded $\Lambda$-module homomorphism 
\[
f \colon \Omega^i(S_j)(i) \to S_\ell,
\]
the kernel of $f$ is a Koszul module. This is evidently independent of the decomposition of $S$, and it is equivalent to the condition that every graded maximal submodule of $\Omega^i(S_j)(i)$ is Koszul for all $i \geq 0$.
\end{definition}

The following collects general conditions for a Koszul ring to be a domain, piecewise domain, or prime. Recall that a quiver is \emph{strongly connected} if any two vertices in the quiver are connected by a path.

\begin{theorem}[Theorem~\ref{thm:Koszul and strongly connected}]
Let $R$ be a Koszul ring with $S = R_0$ such that $R_1$ finitely generated as both a left and right $S$-module.  Denote $\Lambda = \Ext^\bullet_{R \lMod}(S,S)$. 
\begin{enumerate}
\item $R$ is a piecewise domain (resp., domain) if and only if  $\Lambda$ satisfies the Koszul syzygy condition (resp., and $S$ is a division ring).
\item Assume that $R$ is a piecewise domain. Then $R$ is prime if and only if the underlying quiver of $R$  (Definition~\ref{def:quivers}) is strongly connected.
\end{enumerate}
\end{theorem}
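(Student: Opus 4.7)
Part (2) is the more direct of the two: fix idempotents $e_1,\dots,e_r$ for the simple summands of $S$, so that (in the convention for the underlying quiver) $e_j R e_i$ consists of paths from $i$ to $j$. If the quiver is strongly connected, take nonzero two-sided ideals $I,J$ and pick nonzero $a = e_i a e_j \in I$, $b = e_k b e_l \in J$; strong connectivity yields a nonzero $c \in e_j R e_k$, and two applications of the piecewise domain property give $acb \neq 0$ in $e_i R e_l \cap IJ$, so $R$ is prime. Conversely, suppose the quiver is not strongly connected and fix a vertex $v$ that cannot reach some vertex $w$. Let $V$ denote the set of vertices reachable from $v$; a path-splicing argument shows $e_{u'} R e_u = 0$ whenever $u \in V$ and $u' \notin V$. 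Setting $e = \sum_{u \in V} e_u$ and $f = 1 - e$, we have $fRe = 0$, so the nonzero two-sided ideals $RfR$ and $ReR$ satisfy $(RfR)(ReR) \subseteq R(fRe)R = 0$, exhibiting $R$ as not prime.

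For Part (1), my plan is to translate the Koszul syzygy condition on $\Lambda$ into a property of $R$ via Koszul duality. The right $\Lambda$-module $\Omega^i(S_j)(i)$ is Koszul and generated in degree zero, and its degree-zero part is naturally identified (via the Koszul isomorphism between $\mathrm{Tor}^\Lambda_i(S_j,S)_i$ and a graded piece of $R$) with something like $e_j R_i$. Consequently, surjective $\Lambda$-homomorphisms $f\colon \Omega^i(S_j)(i) \to S_\ell$ are in natural bijection with nonzero homogeneous elements $a \in e_\ell R_i e_j$. Under Koszul duality for modules, the short exact sequence $0 \to \ker f \to \Omega^i(S_j)(i) \to S_\ell \to 0$ should translate on the $R$-side into a sequence whose key map is multiplication by $a$ between projective summands of $R$; I expect that $\ker f$ is Koszul if and only if this multiplication map is injective. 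Running this equivalence over all $(i,j,\ell,a)$, the Koszul syzygy condition becomes equivalent to $R$ being a \emph{graded piecewise domain}, and a standard leading-term argument for $\mathbb{N}$-graded rings upgrades this to piecewise domain. When $S$ is a division ring, the trivial idempotent structure reduces piecewise domain to domain; conversely, if $R$ is a domain, the semisimple subring $S$ has no zero divisors and must be a division ring.

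The main obstacle is the Koszul duality translation in Part (1): carefully verifying that ``$\ker f$ is Koszul'' on the $\Lambda$-side is equivalent to injectivity of the corresponding multiplication map on the $R$-side. This should be a consequence of the general categorical framework on orbital rings developed in the paper to extend Guo's methods, but unpacking Koszul duality on modules and tracking the roles of idempotents and grading shifts through a short exact sequence is where the genuine technical content lies.
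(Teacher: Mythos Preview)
Your Part~(2) is correct and essentially matches the paper's argument. The paper invokes the characterization of prime piecewise domains (Remark~\ref{rem:piecewise prime}: $R$ prime $\iff e_j R e_i \neq 0$ for all $i \neq j$) and then uses that $R$ is generated by $R_1$ over $S$ to reduce nonvanishing of $e_j R e_i$ to existence of a path in $Q(R)$, using the piecewise domain property to ensure products of arrows along a path are nonzero. Your direct ideal-theoretic argument and your ``reachable set'' construction for the converse are minor variations on the same theme.

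For Part~(1), your instincts about the correspondence are right, but the route you sketch is not the one the paper takes, and your version leaves the hard step unproven. You propose to translate the short exact sequence $0 \to \ker f \to \Omega^i(S_j)(i) \to S_\ell \to 0$ through ``Koszul duality for modules'' and read off injectivity of multiplication by $a$ on the $R$-side. Setting up a module-level Koszul duality equivalence precisely enough to track Koszulity of $\ker f$ through it is exactly the delicate point, and you have not indicated how to do it beyond hoping it falls out of the orbital framework.

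The paper avoids this by using the orbital ring realization directly. Proposition~\ref{prop:endofunctor} gives an isomorphism $R \cong \Ext^\bullet_{\rMod \Lambda}(S,S) \cong O_{\K(\Lambda)}(F,S)$ with $F = \Omega(-)(1)$, under which the multiplication in $R$ is \emph{defined} by $f * g = f \circ F^i(g)$. Thus the piecewise domain property for $R$ becomes: for nonzero $f \colon F^i(S_j) \to S_\ell$ and $g \colon F^m(S_\ell) \to S_p$, the composite $g \circ F^m(f)$ is nonzero. Since $g$ has simple target, this holds precisely when $F^m(f)$ is surjective for all $m \geq 0$. Theorem~\ref{thm:Koszul prime} then shows, via the syzygy lemmas (Lemmas~\ref{lem:samedegree} and~\ref{lem:exactpreserved}) applied inductively, that ``$F^m(f)$ surjective for all $m$'' is equivalent to ``$\ker f$ Koszul''. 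So the equivalence you conjectured between injectivity of multiplication by $a$ and Koszulity of $\ker f$ is proved, but by analyzing how $\Omega$ interacts with surjections on the $\Lambda$-side rather than by transporting a short exact sequence across a duality functor.
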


An immediate consequence of part~(1) above is the following necessary and sufficient condition for a connected graded Koszul algebra to be a domain. 

\begin{theorem}
Let $k$ be an arbitrary field.  Let $A$ be a finitely generated connected graded Koszul $k$-algebra. Then $A$ is a domain if and only if $\Lambda = \Ext^\bullet_A(k,k) \cong A^!$ satisfies the Koszul syzygy condition.
\end{theorem}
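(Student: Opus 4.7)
The plan is to deduce this theorem as a direct specialization of part~(1) of Theorem~\ref{thm:Koszul and strongly connected} to the connected graded setting. The work amounts to verifying that the hypotheses of that theorem apply and observing that the ``piecewise domain'' clause collapses to ``domain.''

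First, I would check that a finitely generated connected graded Koszul $k$-algebra $A$ fits the hypothesis framework of Theorem~\ref{thm:Koszul and strongly connected}. Take $R = A$ and $S = A_0 = k$, which is semisimple (in fact a division ring). To see that $A_1$ is finitely generated as both a left and right $k$-module, recall that any Koszul algebra is quadratic and hence generated in degree one; combined with the assumption that $A$ is a finitely generated $k$-algebra, this forces $\dim_k A_1 < \infty$. Thus $A$ satisfies the standing hypotheses of the main theorem.

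Applying Theorem~\ref{thm:Koszul and strongly connected}(1) then gives that $A$ is a domain if and only if $\Lambda = \Ext^\bullet_A(k,k)$ satisfies the Koszul syzygy condition \emph{and} $S$ is a division ring. Since $S = k$ is a field, the latter clause is automatic, leaving exactly the claimed equivalence. The identification $\Lambda \cong A^!$ is the standard Koszul duality statement that for any Koszul algebra the Ext algebra over the trivial module is canonically isomorphic to the quadratic dual, so it requires no separate argument here.

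There is no genuine obstacle to overcome, since the main theorem has absorbed all the technical content. The only point deserving brief attention is the observation that Koszulity forces $A$ to be generated in degree one, so that finite generation of $A$ implies finite-dimensionality of $A_1$ and thereby delivers the $S$-module finiteness hypothesis needed to invoke Theorem~\ref{thm:Koszul and strongly connected}.
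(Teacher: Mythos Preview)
Your proposal is correct and matches the paper's approach exactly: the paper states this theorem as ``an immediate consequence of part~(1) above,'' referring to Theorem~\ref{thm:Koszul and strongly connected}, without giving a separate proof. Your verification that the hypotheses apply (in particular that Koszulity forces generation in degree one, so finite generation of $A$ yields $\dim_k A_1 < \infty$) and that $S = k$ being a field collapses the division-ring clause is precisely what is needed to justify the word ``immediate.''
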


\separate

Our original motivation for this investigation was the special case of Artin-Schelter regular algebras~\cite{ArtinSchelter} and their generalizations~\cite{RR:generalized}. 
It was conjectured by Artin, Tate, and Van den Bergh~\cite[p.~338]{ATV2} that every Artin-Schelter regular algebra (of finite GK-dimension) is a domain.
(They further conjectured that these algebras should be noetherian, although that is not the focus of this paper.)
Several results in the literature provide evidence toward the conjecture. In addition to the general results mentioned above, we have:
\begin{itemize}
\item In dimensions~2 and~3, the conjecture is known by classification~\cite{ArtinSchelter, ATV1}. In dimension~2, this is even known without assuming finite GK-dimension~\cite{Zhang}.
\item In dimension~4, the conjecture is proved under the additional assumption that the algebra is noetherian in~\cite[Section~3]{ATV2}.
\end{itemize}
AS regular algebras are famously characterized among connected graded algebras by the condition that their Ext algebras are Frobenius~\cite{Smith,LPWZ}. Thus in order to prove that \emph{Koszul} AS regular algebras are domains, it would suffice by our results to prove that connected graded Koszul Frobenius algebras satisfy the Koszul syzygy condition. While this is still a challenging problem, we hope that it might be achievable through careful study.

On the other hand, it is now well understood that the twisted Calabi-Yau property provides a useful generalization of Artin-Schelter regularity to graded algebras that are not necessarily connected~\cite{RRZ, RR:generalized}.
It seems reasonable to expect that if such an algebra is indecomposable, then it is prime.
We are able to verify this in dimension~2 for many such algebras, whether or not they are noetherian.
The \emph{outdegree} of a vertex $v$ in a quiver is the number of arrows for which $v$ is the source.

\begin{theorem}[Theorem~\ref{thm:quiver-main}]
\label{thm:quiver-main-intro}
Let $Q$ be a quiver in which every vertex has outdegree at least~2. Suppose that $A = kQ/I$ is a graded twisted Calabi-Yau algebra of dimension~2. Then $A$ is a semiprime piecewise domain, and
\[
A \mbox{ is prime} \iff Q \mbox{ is connected} \iff Q \mbox{ is strongly connected}.
\]
\end{theorem}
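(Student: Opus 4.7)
The plan is to apply Theorem~\ref{thm:Koszul and strongly connected} with $R = A$ and $S = kQ_0$, reducing the problem to three tasks: (i) showing that $A$ is Koszul; (ii) verifying that $\Lambda = \Ext^\bullet_{A \lMod}(S, S)$ satisfies the Koszul syzygy condition; and (iii) showing that under the outdegree hypothesis, the quiver $Q$ is connected if and only if it is strongly connected. Tasks (i) and (iii) should follow from general features of the twisted Calabi-Yau structure in dimension two, while (ii) is where the bulk of the work lies.

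For (i), any graded twisted Calabi-Yau algebra $A = kQ/I$ of dimension~$2$ admits a minimal $S$-bimodule projective resolution
\[
0 \to A \otimes_S {}^{\nu}\!S(-2) \otimes_S A \to A \otimes_S V(-1) \otimes_S A \to A \otimes_S A \to A \to 0,
\]
with $V = kQ_1$ and $\nu$ the Nakayama automorphism; linearity of this resolution forces the defining ideal $I$ to be quadratic and gives that $A$ is Koszul. Consequently, $\Lambda$ is a Koszul Frobenius algebra concentrated in degrees $0, 1, 2$, with $\Lambda_1$ in duality with the arrow space $V$ and $\Lambda_2 \cong {}^{\nu}\!S$ as $S$-bimodules. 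The hypothesis that every vertex of $Q$ has outdegree at least $2$ translates into $\dim(e_j \Lambda_1) \geq 2$ for every primitive idempotent $e_j \in S$; this is the key numerical input available for the syzygy computations.

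The main technical obstacle is task~(ii). I would proceed by using the Koszul duality between Koszul right $\Lambda$-modules and Koszul left $A$-modules to identify $\Omega^i(S_j)(i)$, as a right $\Lambda$-module, with the truncated projective $A e_j\langle {\geq i} \rangle$, and a maximal submodule $N \subset \Omega^i(S_j)(i)$ as the Koszul dual of an extension of $A e_j\langle {\geq i} \rangle$ by a simple top generator. A surjection $f \colon \Omega^i(S_j)(i) \twoheadrightarrow S_\ell$ is obtained by killing a single degree-$0$ generator of the top, and the Koszulity of $\ker f$ should be tested by showing that its projective cover maps onto it linearly. Here the condition $\dim(e_j \Lambda_1) \geq 2$ is crucial: after removing one degree-$0$ generator from the top of $\Omega^i(S_j)(i)$, at least one top generator remains, leaving enough room for the new module to possess a linear projective resolution. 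Carrying this out uniformly in $i$, while keeping track of the Frobenius duality on $\Lambda$, is the heart of the argument and the part most likely to require delicate bookkeeping.

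Granting (ii), Theorem~\ref{thm:Koszul and strongly connected}(1) immediately yields that $A$ is a piecewise domain (hence semiprime), and part~(2) gives that $A$ is prime if and only if $Q$ is strongly connected. For task~(iii), the superpotential structure of a quiver TCY$_2$ algebra guarantees that for each arrow $a \colon v \to w$ of $Q$, the defining relation at $v$ provides a companion arrow $w \to \nu(v)$, so any directed path from $v$ to $w$ can be extended to a directed path from $w$ back to some $\nu$-translate of $v$. Since $A_1$ is finite-dimensional over $S$, $Q$ is locally finite, so once $Q$ is connected, $Q_0$ must be finite and $\nu$ acts as a finite-order permutation on $Q_0$. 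An inductive walk along any weakly-connecting path in $Q$ therefore yields directed paths in both directions between any two vertices, proving that $Q$ is strongly connected whenever it is connected.
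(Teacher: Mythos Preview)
Your overall strategy---reduce to Theorem~\ref{thm:Koszul and strongly connected} and verify the Koszul syzygy condition for $\Lambda = E(A)$---is exactly the paper's plan, and your tasks (i) and (iii) are reasonable (though see below). The problem lies in task~(ii), which is where essentially all of the work in the paper is concentrated.

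Your proposed verification of the Koszul syzygy condition is not an argument. You say you would identify $\Omega^i(S_j)(i)$ with a truncated projective on the $A$-side and then check that, after killing one degree-$0$ generator, ``at least one top generator remains, leaving enough room for the new module to possess a linear projective resolution.'' But having a nonzero degree-$0$ top says nothing about Koszulity: a module generated in degree~$0$ can easily fail to have its higher syzygies generated in the correct degrees. You would need to control \emph{all} syzygies of $\ker f$, uniformly in $i$, and nothing in your outline does this. The translation to the $A$-side via Koszul duality is also left entirely schematic; making that correspondence precise and then using it to read off linearity of a resolution would be at least as much work as the direct approach.

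The paper's route for (ii) is quite different and worth knowing. It works entirely on the $\Lambda$-side and is numerical. First, Section~\ref{sec:graded QF} develops a small structure theory for ``compressed'' modules over a graded quasi-Frobenius ring of graded length~$3$: Lemma~\ref{lem:notkoszul} shows that every indecomposable compressed non-Koszul module has the form $F^{-m}(S_i(-1))$, and every indecomposable quotient of $F^n(S_j)$ has the form $F^m(S_i)$. Then Proposition~\ref{prop:quiver-hs} computes Hilbert series: the indegree-$\geq 2$ hypothesis on $Q\op$ forces $h_{F^n(S_j)}(t)=a+bt$ with $a>b$, while $h_{F^{-m}(S_i(-1))}(t)=c+dt$ with $c<d$. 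If the Koszul syzygy condition failed, one would obtain a short exact sequence producing the equation $1+c+dt=a+bt$ with $c<d$ and $a>b$, a contradiction. This is the ``key lemma'' your outline is missing.

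For (iii), your superpotential/Nakayama-walk argument is plausible but not complete as stated (the claim that an arrow $v\to w$ yields a companion $w\to \nu(v)$ needs justification, and the inductive walk must be made precise). The paper avoids this by invoking known structure results: once $\Lambda\cong kQ\op/(I_2^\perp)$ is self-injective with connected quiver, Proposition~\ref{prop:path algebra Frobenius} (via results of Mart\'inez-Villa and Green) gives that $Q\op$, hence $Q$, is strongly connected. That also supplies $\soc(\Lambda)=\Lambda_2$, which is needed for the Hilbert-series argument above.
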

While some of the algebras appearing in Theorem~\ref{thm:quiver-main-intro} are known to be prime by other methods, we are   unaware of any general result about primeness for the full class of algebras in the theorem above, even for preprojective algebras, though there are many proofs that such algebras are twisted Calabi-Yau.   Moreover, our theorem gives the stronger conclusion that these are piecewise domains, which means the structure of zero-divisors is completely understood.  On the other hand, there are twisted Calabi-Yau algebras $kQ/I$ of dimension $2$ that do not satisfy the hypothesis on $Q$ in the theorem.
These will not be piecewise domains (see Remark~\ref{rem:not-pd}) but are still expected to be prime when $Q$ is connected. 
We suspect that a variation of our method can be used to give restrictions on the possible zero-divisors of these algebras and still prove primeness in some cases, but we do not pursue this here.

A brief outline of the paper is as follows. In Section~\ref{sec:orbital} we introduce orbital rings and prove Theorem~\ref{thm:Guo piecewise}, which gives sufficient conditions for such a ring to be a prime piecewise domain. In Section~\ref{sec:Koszul modules} we show how the Ext algebra of a Koszul ring $R$ can be realized as an orbital ring, in such a way that this criterion can be applied in certain cases to prove that $R$ is a prime piecewise domain. Section~\ref{sec:graded QF} studies the special case of Koszul modules over graded quasi-Frobenius rings. Those results are  applied to the case of twisted CY-2 algebras in Section~\ref{sec:application}. Finally, Appendix~\ref{sec:appendix} explains our conventions on Ext algebras of left versus right modules, both of which appear in the proof of Theorem~\ref{thm:Koszul and strongly connected}.

\subsection*{Acknowledgments}

We are very grateful to Jason Gaddis for introducing us to the paper~\cite{Guo:prime}.  We thank Vladimir Baranovsky, Paul Smith, Toby Stafford, and James Zhang for helpful conversations.

\section{Orbital algebras and the method of Guo}
\label{sec:orbital}

Let $\C$ be an additive category. For an object $M$ of $\C$ and an additive endofunctor
$F \colon \C \to \C$, the associated \emph{orbital ring} is an $\N$-graded ring
\[
O(F,M) = O_\C(F,M) := \bigoplus_{i=0}^\infty \Hom_{\C}(F^i M, M)
\]
whose multiplication is defined, for homogeneous elements $f \in \Hom_{\C}(F^i M,M)$ and $g \in \Hom_{\C}(F^j M,M)$, by
\[
f * g = f \circ F^i(g) \in \Hom_{\C}(F^{i+j} M, M).
\]
If $\C$ is a $k$-linear category over a commutative ring $k$, then $O(F,M)$ naturally inherits the structure of a graded $k$-algebra, and we may refer to it as the \emph{orbital algebra}.
\begin{remark}
There is an obvious ``right-handed'' analogue of the orbital ring
\[
O^r(F,M) := \bigoplus_{i = 0}^\infty \Hom_{\C}(M, F^i M),
\]
whose multiplication is defined on homogeneous elements $f$ of degree $i$ and $g$ of degree $j$ by $f * g = F^j(f) \circ g$. 
Note that if we replace $\C$ and $F$ by the opposite category $\C\op$ and its induced endofunctor $F\op$, we may view this as another instance of the usual orbital ring as follows:
\[
O_{\C\op}(F\op, M) = \bigoplus_{i=0}^\infty \Hom_{\C\op}((F\op)^i M, M) \cong \bigoplus_{i = 0}^\infty \Hom_{\C}(M, F^i M) = O^r(F,M).
\]
Also note that if $F$ is an autoequivalence with quasi-inverse $F^{-1}$, then $O(F,M) \cong O^r(F^{-1},M)$. 
\end{remark}

The following three examples show that a number of ring constructions can be viewed as special cases of orbital rings. These will be examples arising in homological algebra, noncommutative ring theory, and noncommutative projective geometry. 

\begin{example}\label{ex:Ext as orbital}
Let $\A$ be an abelian category with enough injective or projective objects, and let $\C$ denote the derived category $\mathcal{D(A)}$ (either unbounded, or bounded above or below as appropriate). Let $F = \Sigma^{-1}$ be the inverse translation functor of the derived category. Then for any object $M$ of $\A$, the orbital ring takes a form that is well-known~\cite[Corollary~10.7.5]{Weibel} to be isomorphic to the Ext algebra of $M$ with the Yoneda product:
\begin{align*}
O(\Sigma^{-1}, M) &\cong O^r(\Sigma, M) \\ 
&= \bigoplus \Hom_{\mathcal{D}(\A)}(M, \Sigma^i M) \\
&\cong \Ext_\A^\bullet(M,M).
\end{align*}
\end{example}

\begin{example}\label{ex:skew polynomial}
Let $A$ be a ring with an endomorphism $\sigma$. Let $\C = \rcatMod A$ be the category of right $A$-modules.  The twisted bimodule $A^{\sigma}$ (whose left $A$-multiplication is given in the ordinary way and whose right multiplication is given by $m \cdot a := m \sigma(a)$) induces a twisting endofunctor $F = - \otimes_A A^{\sigma}$ of $\C$. Consider the right orbital ring $O^r(F, A)$.  The module $F^i(A) \cong A^{\sigma^i}$ can be identified with $A$ as an abelian group; with such identifications an element of $\Hom_A(A, A^{\sigma^i})$ is of the form $a \sigma^i(-)$ for some $a \in A$, and the action of $F$ on morphisms is the identity.  

Now  for $f = a \sigma^i(-) \in \Hom(A, F^i(A))$ and $g = b \sigma^j(-) \in \Hom(A, F^j(A))$ we have $F^j(f) \circ g = a\sigma^i(b) \sigma^{i+j}(-) \in \Hom_A(A, F^{i+j}(A))$.  On the other hand, in the skew polynomial ring $A[x, \sigma]$, graded with $\deg x = 1$, we have $(ax^i)(bx^j)= a\sigma^i(b) x^{i+j}$.  
We see that 
\begin{align*}
O^r(F,A) &= \bigoplus \Hom_{\C}(A, F^i(A)) \cong A[x; \sigma]
\end{align*}
as graded rings.
\end{example}

\begin{example}\label{ex:twisted homogeneous}
Let $X$ be a scheme with an automorphism $\sigma$ and an invertible sheaf $\LL$. Let $\C = \cat{Qcoh}(X)$, and consider the endofunctor that is defined by pullback via~$\sigma$ composed with tensoring by $\LL$:
\[
F(\mathcal{M}) = \LL \otimes_{\OO_X} \sigma^*\mathcal{M}.
\]
Then the right orbital ring of the structure sheaf $\OO_X$ is isomorphic to the twisted homogeneous coordinate ring~\cite{AV} as follows:
\begin{align*}
O^r(F, \OO_X) &\cong \bigoplus \Hom_{\OO_X}(\OO_X, F^i(\OO_X)) \\
&\cong \bigoplus \Gamma(X, \LL \otimes \sigma^*\LL \otimes \cdots \otimes (\sigma^{i-1})^*\LL) \\
&= B(X, \sigma, \LL).
\end{align*}
\end{example}

The results below will provide sufficient conditions to tame the zero-divisors of an orbital ring $O(F,M)$. In the next theorem, we adapt the method used by Guo in \cite[Theorem~3.2]{Guo:prime} to prove that an orbital ring is prime. In addition to primeness, the method allows one to show 
in cases of interest that an orbital ring is a piecewise domain in the sense of Gordon and Small~\cite{GordonSmall:piecewise}, whose definition we now recall. 

Let $R$ be a ring with a set of pairwise orthogonal idempotents $\{e_i\}$ such that $1 = \sum_{i=1}^r e_i$. Then $R$ is a \emph{piecewise domain (with respect to $\{e_i\}$)} if 
\[
    x \in e_i R e_j, \ y \in e_j R e_\ell, \mbox{ and } xy = 0 \implies x = 0 \mbox{ or } y = 0.
\]
It is easy to see that if $R$ is a piecewise domain with respect to a set of idempotents $\{e_i \}$, then the $e_i$ must be primitive.
A structure theory for piecewise domains was developed in~\cite{GordonSmall:piecewise}, showing that such rings have an ``block upper triangular'' structure whose diagonal blocks are prime piecewise domains, and that prime piecewise domains have a matrix-like structure relative to a list of domains.  The property of being a piecewise domain is logically independent of that of being a prime ring: prime rings with zerodivisors but no nontrivial idempotents cannot be piecewise domains, while triangular matrix algebras such as $\left(\begin{smallmatrix} \mathbb{Q} & \mathbb{Q} \\ 0 & \mathbb{Q} \end{smallmatrix}\right)$ are picewise domains that are not even semiprime. 
On the other hand, we have the following characterization of prime piecewise domains.

\begin{remark}
\label{rem:piecewise prime}
If $R$ is a piecewise domain with respect to $\{e_1, \dots, e_r\}$, then it follows from~\cite[Main Theorem]{GordonSmall:piecewise} that
$R$ is prime if and only if $e_i R e_j \neq 0$ for all $i \neq j$.
\end{remark}

\begin{theorem}\label{thm:Guo piecewise}
Let $\C$ be an 
additive category with an endofunctor $F$. Suppose that $M$ is an object of $\C$ with a decomposition
\[
    M = M_1 \oplus \cdots \oplus M_r
\] 
such that the following hold:
\begin{enumerate}[label = \textnormal{(\roman*)}]
\item every nonzero $f \colon F^d(M_j) \to M_\ell$ is an epimorphism;
\item if $g \colon F^e(M_j) \to M_{\ell}$ is an epimorphism, then so is $F^d(g)$ for any $d \geq 0$.
\end{enumerate}
Then $O(F,M)$ is a piecewise domain with respect to the idempotents in $\End_{\mathcal{C}}(M) = O(F,M)_0$ corresponding to the decomposition of $M$ above.

Furthermore, suppose that \textnormal{(i)--(ii)} hold along with:
\begin{enumerate}[resume, label = \textnormal{(\roman*)}]
\item for $j \neq \ell$, there exists a nonzero morphism $h \colon F^i(M_j) \to M_\ell$ for some $i \geq 0$.
\end{enumerate}
Then $O(F,M)$ is prime.
\end{theorem}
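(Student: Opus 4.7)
Let $R = O(F,M)$. The plan is to identify each corner $e_a R e_b$ of the orbital ring with a graded sum of Hom-spaces in $\C$, then translate conditions (i)--(ii) into the nonvanishing of certain compositions and condition (iii) into the nonvanishing of the off-diagonal corners. Let $e_j \in \End_{\C}(M) = R_0$ denote the idempotent $\iota_j \circ \pi_j$ associated to the summand $M_j$, where $\iota_j$ and $\pi_j$ are the canonical inclusion and projection. Since $F$ is additive, a direct computation with the multiplication $f * g = f \circ F^i(g)$ shows that $e_a R_d e_b \cong \Hom_{\C}(F^d M_b, M_a)$ for each $d \geq 0$; under this identification, the product of $x \in e_a R_d e_b$ and $y \in e_b R_e e_c$ corresponds to the composition $\tilde{x} \circ F^d(\tilde{y})$ of the associated component morphisms.

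For the piecewise domain assertion, I would suppose $x \in e_a R e_b$ and $y \in e_b R e_c$ are both nonzero, of degrees $d$ and $e$, with component morphisms $\tilde{x} \colon F^d M_b \to M_a$ and $\tilde{y} \colon F^e M_c \to M_b$. By hypothesis (i), both $\tilde{x}$ and $\tilde{y}$ are epimorphisms, and by hypothesis (ii), $F^d(\tilde{y})$ is an epimorphism as well. Since a composition of epimorphisms is an epimorphism, $\tilde{x} \circ F^d(\tilde{y})$ is an epimorphism onto $M_a$; because the existence of a nonzero morphism into $M_a$ forces $M_a \neq 0$, this composition cannot be the zero morphism in the additive category $\C$. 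Hence $xy \neq 0$, establishing that $R$ is a piecewise domain with respect to $\{e_j\}$.

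For primeness, I would invoke Remark~\ref{rem:piecewise prime}, which, combined with the piecewise domain property just established, reduces the task to showing $e_\ell R e_j \neq 0$ for every pair $\ell \neq j$. Condition (iii) supplies exactly this: a nonzero morphism $h \colon F^i M_j \to M_\ell$ corresponds under our identification to a nonzero element of $e_\ell R_i e_j$. The main technical obstacle is simply the bookkeeping needed to verify the identification $e_a R_d e_b \cong \Hom_{\C}(F^d M_b, M_a)$ and the compatibility of the orbital multiplication with the composition $\tilde{x} \circ F^d(\tilde{y})$; once those correspondences are in place, everything reduces to the basic categorical fact that a composition of epimorphisms with a nonzero target is itself nonzero.
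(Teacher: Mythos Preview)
Your proposal is correct and follows essentially the same route as the paper: identify the Peirce corners with $\Hom_{\C}(F^d M_b, M_a)$, then use (i)--(ii) to force the relevant composite to be nonzero, and finish primeness via Remark~\ref{rem:piecewise prime}. The only cosmetic difference is that the paper applies (i) only to the right-hand factor $\tilde{y}$ and then uses the defining cancellation property of the epimorphism $F^d(\tilde{y})$ against the nonzero $\tilde{x}$, whereas you invoke (i) on both factors and argue that an epimorphism onto $M_a \neq 0$ cannot vanish; either argument works.
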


\begin{proof}
Assume~(i) and~(ii) hold, and let $R = O(F,M)$ denote the orbital ring.  For notational simplicity let us write $\C(M,N)$ for $\Hom_{\C}(M,N)$ below.  For each direct summand $M_i$ of $M$, let $e_i \in \C(M,M)$ be the idempotent given by a retraction of $M$ to $M_i$ followed by its corresponding section from $M_i$ into $M$:
\[
    e_i \colon M \overset{\pi_i}{\longrightarrow} M_i \overset{\sigma_i}{\longrightarrow} M.
\]
Then $1 = \sum e_i$ in $R$.  

We claim that we can identify the graded components of the Peirce corners of the orbital ring as 
\begin{equation}\label{eq:corner}
e_i \C(F^d M, M)e_j \cong \C(F^d M_j, M_i).
\end{equation}
Indeed, let $\phi \in \C(F^d M, M)$ be a degree~$d$ element of $R$. Recalling that $e_i$ has degree zero, we have
\begin{align*}
e_i * \phi * e_j &= e_i \circ (\phi * e_j) \\
	&= e_i \circ \phi \circ F^d(e_j).
\end{align*}
Thus we can identify the $d$th graded piece of the corner as
\[
e_i * R_d * e_j = e_i \circ \C(F^d M, M) \circ F^d(e_j). 
\]
Because $F$ is additive, we have $F^d M = \bigoplus_{i=1}^r F^d(M_i)$ with corresponding idempotent decomposition $1 = \sum F^d(e_i)$ in $\C(F^d M, F^d M)$. Because each $\sigma_i$ is a split monomorphism and each $F^d(\pi_j)$ is a split epimorphism, we may cancel them in the second isomorphism below:
\begin{align*}
e_i \circ \C(F^d M, M) \circ F^d(e_j) &= \sigma_i \circ \pi_i \circ \C(F^d M, M) \circ F^d(\sigma^j) \circ F^d(\pi_j) \\
&\cong  \pi_i \circ \C(F^d M, M) \circ F^d(\sigma^j) \\
&=  \C(F^d M_j, M_i).
\end{align*} 
Thus we obtain the identification~\eqref{eq:corner} as claimed.

To prove that the graded ring $R$ is a piecewise domain, it certainly suffices to fix nonzero homogeneous elements $f \in e_i R e_j$ and $g \in e_j R e_\ell$ and prove that $f * g \neq 0$. Letting $d$ and $e$ respectively denote the degrees of $f$ and $g$, under the identification~\eqref{eq:corner} we may view
\begin{align*}
f &\in e_i \C(F^d M, M) e_j = \C(F^d M_j, M_i), \\
g &\in e_j \C(F^e M, M) e_\ell = \C(F^e M_\ell, M_j).
\end{align*}
Hypothesis~(i) implies that $g$ is an epimorphism, so that $F^d(g)$ is also an epimorphism by hypothesis~(ii). Under~\eqref{eq:corner} once more, the product $f * g \in \C(F^{d+e} M, M)$ corresponds to the composite 
\[
    f \circ F^d(g) \in \C(F^{d+e}M_\ell, M_i).
\]
Since $F^d(g)$ is an epimorphism and $f$ is nonzero, the composite above is nonzero. Thus we find $f * g \neq 0$ as desired.

Finally, if~(iii) also holds, then from~\eqref{eq:corner}, we see that for all $j \neq \ell \in \{1,\dots,r\}$ there exists $i \geq 0$ such that $e_j R_i e_\ell \neq 0$. By Remark~\ref{rem:piecewise prime} we deduce that $R$ is prime.
\end{proof}

We state for reference the special case of the preceding result in which there is only one summand.
\begin{corollary}\label{thm:Guo domain}
Let $\C$ be an additive category, let $F$ be an additive endofunctor of $\C$, and let $M$ be a nonzero object of $\C$. Suppose that the following hold:
\begin{enumerate}[label = \textnormal{(\roman*)}]
\item For each $i \geq 0$, every nonzero morphism $f \colon F^i(M) \to M$ is an epimorphism.
\item If $g \colon F^i(M) \to M$ is an epimorphism, then $F^j(g)$ is also an epimorphism for all $j \geq 0$.
\end{enumerate}
Then the orbital ring $O(F,M)$ is a domain. 
\end{corollary}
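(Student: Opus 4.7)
My plan is to obtain Corollary~\ref{thm:Guo domain} as the one-summand specialization of Theorem~\ref{thm:Guo piecewise}. I would take the trivial decomposition $M = M_1$ with $r = 1$, and observe that hypotheses (i) and (ii) of the theorem reduce to exactly hypotheses (i) and (ii) of the corollary (with $j = \ell = 1$), while hypothesis (iii), which demands nonzero morphisms between distinct summands, is vacuous since there are no pairs $j \neq \ell$ in the index set $\{1\}$. The theorem then yields that $O(F,M)$ is a piecewise domain with respect to $\{e_1\} = \{1\}$, which by the definition of ``piecewise domain'' given just before Remark~\ref{rem:piecewise prime} is precisely the statement that $O(F,M)$ is a domain: for $x, y \in 1 \cdot R \cdot 1 = R$, the relation $xy = 0$ forces $x = 0$ or $y = 0$.

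As a sanity check I would also verify the essential computation directly, which in this one-summand setting specializes to only a few lines. It suffices to show that $O(F,M)$ has no nonzero homogeneous zero-divisors, since in an $\N$-graded ring a relation $ab = 0$ with $a, b \neq 0$ would force the product of the lowest-degree nonzero homogeneous components of $a$ and $b$ to vanish. So, given nonzero $f \in \Hom_{\C}(F^d M, M)$ and $g \in \Hom_{\C}(F^e M, M)$, hypothesis~(i) makes $g$ an epimorphism and hypothesis~(ii) then makes $F^d(g)$ an epimorphism; the product $f \mult g = f \circ F^d(g)$ is therefore nonzero, since precomposing a nonzero morphism with an epimorphism cannot annihilate it.

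I do not anticipate a genuine obstacle here, as this corollary is structured as a transparent specialization of the theorem just proved. The only real content beyond quoting Theorem~\ref{thm:Guo piecewise} is the trivial observation that a piecewise domain with respect to a single idempotent is a domain, and a brief remark that nonexistence of homogeneous zero-divisors in an $\N$-graded ring implies the ring is a domain (via leading-term considerations).
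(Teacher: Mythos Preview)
Your proposal is correct and matches the paper's approach exactly: the paper states this corollary immediately after Theorem~\ref{thm:Guo piecewise} as ``the special case of the preceding result in which there is only one summand,'' without further proof. Your additional direct verification via leading homogeneous components is a harmless (and accurate) sanity check, but is not needed beyond the one-line observation that a piecewise domain with respect to $\{1\}$ is a domain.
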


\separate

Of course, because the ``right-handed'' orbital rings have the form $O^r(F,M) = O_{\C\op}(F\op,M)$, there are dual versions of the results above for such rings. For instance, the dual version of Corollary~\ref{thm:Guo domain} would state that if
\begin{itemize}
\item every nonzero morphism $M \to F^i(M)$ is a monomorphism for $i \geq 0$, and
\item for every monomorphism $g \colon M \to F^i(M)$ and $j \geq 0$, $F^j(g)$ is also a monomorphism,
\end{itemize}
then $O^r(F,M)$ is a domain.
In the context of Examples~\ref{ex:skew polynomial} and~\ref{ex:twisted homogeneous}, we recover the following well-known facts:
\begin{itemize}
\item If $A$ is a domain with an injective endomorphism $\sigma \colon A \to A$, then $A[x; \sigma]$ is also a domain.
\item If $X$ is an integral scheme with an automorphism $\sigma$ and invertible sheaf $\LL$, then the twisted homogeneous coordinate ring $B(X, \sigma, \LL)$ is a domain.
\end{itemize}
These are by no means ``better'' proofs than the traditional ones, but they serve to illustrate that the concept of an orbital algebra covers more constructions than one might expect.

\section{Realizing Ext algebras as orbital algebras}
\label{sec:Koszul modules}

In this section we provide instances of Theorem~\ref{thm:Guo piecewise} that are tailored to the case of 
Ext algebras. While Example~\ref{ex:Ext as orbital} provides one way to view an Ext algebra as an orbital algebra, it is not well suited to the applications of these theorems. Indeed, the hypotheses of these theorems require many morphisms to be epimorphisms, while the only epimorphisms in a triangulated category are the split epimorphisms (cf.~\cite[Exercise~IV.1.1]{GelfandManin}). Thus we seek a different representation of an Ext algebra that works within categories of Koszul modules.

We first recall some basic elements of graded module theory and syzygies. 
Let $\Lambda = \bigoplus_{i=0}^\infty \Lambda_i$ be an $\N$-graded ring.  Let $\rGr \Lambda$ be the category of $\Z$-graded right $\Lambda$-modules, so $\Hom_{\rGr \Lambda}(M,N)$ is the set of degree-preserving module homomorphisms from $M$ to $N$.  Given $M \in \rGr \Lambda$ we write $M = \bigoplus_{n \in \Z} M_n$, where $M_n$ is the $n$th graded piece of $M$.  Given $i \in \Z$, and $M \in \rGr \Lambda$, the shift $M(i)$ is the graded module with $M(i)_n = M_{i+n}$.  
Let $\rgr \Lambda$ be the full subcategory of $\rGr \Lambda$ consisting of finitely generated graded right $\Lambda$-modules.  A module $M \in \rGr \Lambda$ is \emph{left bounded} (or \emph{right bounded}) if $M_n = 0$ for all $n \ll 0$ (respectively $n \gg 0$), and $M$ is \emph{bounded} if it is both left and right bounded.  

For $M, N \in \rGr \Lambda$, we write $\Hom_{\Lambda}(M,N)$ for the usual ungraded Hom-group.  
The following standard result shows that $\Hom_{\Lambda}(M,N)$ obtains a natural $\mb{Z}$-grading in some cases.
\begin{lemma}
\label{lem:easy}
    Let $M, N \in \rGr \Lambda$.  There is a natural inclusion of groups
\[
\bigoplus_{i \in \mb{Z}} \Hom_{\rGr \Lambda}(M, N(i)) \subseteq \Hom_{\Lambda}(M,N)
\]
which is an equality if either (i) $M \in \rgr \Lambda$ or (ii) $M$ is generated by a set of elements contained in finitely many degrees and $N$ is bounded.
\end{lemma}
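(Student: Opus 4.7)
The plan is to construct the inclusion $\Phi$ by ``forgetting the grading,'' establish injectivity via a homogeneity argument, and then prove surjectivity under either of the finiteness hypotheses by decomposing an arbitrary ungraded homomorphism into its graded components.

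First I would observe that any $\phi \in \Hom_{\rGr\Lambda}(M,N(i))$ is a degree-preserving map $M \to N(i)$ and therefore carries $M_n$ into $N(i)_n = N_{n+i}$; regarding $N(i)$ and $N$ as identical as ungraded $\Lambda$-modules, this exhibits $\phi$ as an element of $\Hom_\Lambda(M,N)$. Summing over $i$ defines the natural map $\Phi$. For injectivity, I would take a finite sum $\sum_i \phi_i \in \ker \Phi$ with $\phi_i \in \Hom_{\rGr\Lambda}(M,N(i))$ and evaluate at a homogeneous $m \in M_n$: since each $\phi_i(m) \in N_{n+i}$ lies in a distinct graded piece of $N$, the identity $\sum_i \phi_i(m) = 0$ forces every $\phi_i(m)$ to be zero, whence $\phi_i = 0$ because $M$ is generated by its homogeneous elements.

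For the equality claim, the approach is to decompose an arbitrary $f \in \Hom_\Lambda(M,N)$ into its ``shift by $i$'' components. For each $i \in \mb{Z}$ and homogeneous $m \in M_n$, define $f_i(m)$ to be the $(n+i)$th graded component of $f(m) \in N$, and extend $\mb{Z}$-linearly. A routine check (using that multiplication by $\lambda \in \Lambda_k$ shifts degree by $k$) shows that each $f_i$ is a well-defined graded $\Lambda$-module homomorphism of degree $i$, equivalently an element of $\Hom_{\rGr\Lambda}(M, N(i))$, and that $f(m) = \sum_i f_i(m)$ as a finite pointwise sum for every $m \in M$.

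The main obstacle is ensuring that $f_i = 0$ for all but finitely many $i$, so that $(f_i)_{i \in \mb{Z}}$ truly represents an element of the direct sum; this is exactly where the finiteness hypothesis enters. I would pick homogeneous generators $\{m_\alpha\}$ of $M$ with degrees $n_\alpha$ lying in a finite set $D \subseteq \mb{Z}$, which is possible in case~(i) by taking finitely many generators and by hypothesis in case~(ii). Let $T \subseteq \mb{Z}$ denote the set of indices $i$ for which some $f_i(m_\alpha)$ is nonzero. In case~(i), $T$ is a finite union (over finitely many $\alpha$) of finite sets, so is finite. In case~(ii), the homogeneous support of each $f(m_\alpha)$ is confined to a fixed interval $[a,b]$ in which $N$ is nonzero, and since $D$ is finite, $T$ is contained in the finite union $\bigcup_{n \in D} [a-n, b-n]$. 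In either case $T$ is finite, and $\Lambda$-linearity propagates the vanishing of $f_i$ for $i \notin T$ from the generators to all of $M$, completing the identification.
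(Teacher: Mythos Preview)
Your proof is correct and follows the standard argument for this well-known fact. The paper itself does not supply a proof of this lemma; it is stated as a ``standard result'' and left without justification, so there is nothing to compare against beyond noting that your decomposition into graded components and the finiteness check on the set of nonvanishing shifts is exactly the expected verification.
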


Our standing assumption on the $\N$-graded ring $\Lambda$ will be that 
\begin{center}
$\Lambda_0 = S$ is a semisimple ring. 
\end{center}
Under this hypothesis, every projective object of $\rGr \Lambda$ is of the form $P = V \otimes_S \Lambda$ for some (necessarily projective) graded right $S$-module $V$. 
Any $M \in \rGr \Lambda$ which is left bounded has a projective cover $(M/J(M)) \otimes_S \Lambda \to M$ in $\rGr \Lambda$, where $J(M)$ is  the graded Jacobson radical of $M$.  In particular, by the graded Nakayama lemma, $M$ is generated in a single degree $n$ if and only if its projective cover $P$ is.

Let $\rGrb \Lambda$ denote the full abelian subcategory of $\rGr \Lambda$ consisting of left bounded modules. 
Since projective covers exist in $\rGrb \Lambda$, every left bounded graded $\Lambda$-module $M$ has a minimal graded projective resolution (see~\cite[Proposition~2.3]{LiWu:Koszul}), and this resolution is unique up to (non-unique) isomorphism of complexes.  A graded module $M$ is \emph{Koszul} if it is generated in degree zero and its minimal graded projective resolution 
\[
\cdots \to P_2 \to P_1 \to P_0 \to M \to 0
\]
has each $P_i$ generated in degree~$i$. We let $\K(\Lambda)$ denote the full subcategory of $\rGrb \Lambda$ consisting of the Koszul modules. 

Given $M \in \rGrb \Lambda$, if $P_M \to M$ is a projective cover in $\rGrb \Lambda$ then the \emph{syzygy} of $M$ is the graded module $\Omega M$ where 
\[
0 \to \Omega M \to P_M \to M \to 0
\] 
is exact. Since projective covers are unique up to isomorphism, the graded module $\Omega M$ is well-defined up to isomorphism. Higher syzygies are defined inductively by $\Omega^n M = \Omega(\Omega^{n-1}M)$, and we set $\Omega^0 M = M$.
We recall a few special properties of syzygy modules. 

\begin{lemma}\label{lem:syzygy}
For a graded ring $\Lambda$ as above, let $M,N \in \rGrb \Lambda$ with $N$ finitely generated semisimple.
\begin{enumerate}
\item $\Ext^i_{\rGr \Lambda}(M,N) \cong \Hom_{\rGr \Lambda}(\Omega^i M,N)$ for all $i \geq 0$. 
\item If $M$ is Koszul then $\Ext^i_{\Lambda}(M,N) \cong \Hom_{\Lambda}(\Omega^i M,N)$ for all $i \geq 0$.
\item The module $M$ is Koszul if and only if $\Omega^n(M)$ is generated in degree~$n$ for all $n \geq 0$.
\end{enumerate}
\end{lemma}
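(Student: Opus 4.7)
The plan is to prove the three parts in sequence, with part~(1) doing the main homological work and parts~(2)--(3) following from it together with Lemma~\ref{lem:easy} and the definition of Koszul modules.

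For part~(1), I would begin by choosing a minimal graded projective resolution
\[
\cdots \to P_2 \to P_1 \to P_0 \to M \to 0
\]
of $M$ in $\rGrb \Lambda$, so that $\Omega^i M$ is the image of $P_i \to P_{i-1}$ for $i \geq 1$. The standard dimension-shifting argument, applied to the short exact sequences $0 \to \Omega^{j+1} M \to P_j \to \Omega^j M \to 0$, yields
\[
\Ext^i_{\rGr \Lambda}(M, N) \cong \coker\!\bigl(\Hom_{\rGr \Lambda}(P_{i-1}, N) \to \Hom_{\rGr \Lambda}(\Omega^i M, N)\bigr).
\]
The key observation is that this cokernel equals $\Hom_{\rGr \Lambda}(\Omega^i M, N)$ itself because the connecting map is zero. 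Indeed, the graded Jacobson radical of $\Lambda$ is $\Lambda_{>0}$ (as $\Lambda_0 = S$ is semisimple), so $N$ semisimple means $N \cdot \Lambda_{>0} = 0$; minimality of the resolution forces $\Omega^i M \subseteq P_{i-1} \cdot \Lambda_{>0}$; and any homomorphism $P_{i-1} \to N$ factors through $P_{i-1}/P_{i-1}\Lambda_{>0}$ and therefore restricts to zero on $\Omega^i M$.

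For part~(2), I would combine part~(1) with Lemma~\ref{lem:easy}. The Koszul hypothesis guarantees that $\Omega^i M$ is generated in the single degree $i$, while the assumption that $N$ is finitely generated semisimple makes $N$ (and every shift $N(j)$) bounded. Hence Lemma~\ref{lem:easy} applies at each stage of the minimal resolution and gives
\[
\Hom_\Lambda(P_i, N) = \bigoplus_{j \in \mb{Z}} \Hom_{\rGr \Lambda}(P_i, N(j)),
\]
so computing cohomology produces $\Ext^i_\Lambda(M,N) \cong \bigoplus_j \Ext^i_{\rGr \Lambda}(M, N(j))$. Applying part~(1) to each $N(j)$ and using Lemma~\ref{lem:easy} once more in the opposite direction for $\Omega^i M$ assembles these pieces into $\Hom_\Lambda(\Omega^i M, N)$.

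For part~(3), I would argue directly from the definition. Since $S$ is semisimple, every left-bounded graded module has a projective cover, and for a module generated in a single degree the projective cover is also generated in that same degree. Consequently, the $i$th term $P_i$ of the minimal resolution is generated in degree $i$ if and only if $\Omega^i M$ is (it being the image of $P_i$, whose projective cover is $P_i$ itself). The equivalence with Koszulity is then immediate. The most delicate step in the whole argument is the vanishing of the connecting map in part~(1); everything else is bookkeeping around the definition of a minimal resolution and Lemma~\ref{lem:easy}.
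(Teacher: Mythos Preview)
Your proof is correct and follows essentially the same approach as the paper's. For part~(1) both arguments use dimension shifting together with the key observation that minimality forces $\Omega^i M \subseteq J(P_{i-1})$, so the restriction map $\Hom_{\rGr \Lambda}(P_{i-1},N) \to \Hom_{\rGr \Lambda}(\Omega^i M,N)$ vanishes when $N$ is semisimple; parts~(2) and~(3) are handled identically via Lemma~\ref{lem:easy} and the projective-cover characterization of Koszul modules.
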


\begin{proof}
(1) Assume $i \geq 1$, as the statement is trivial when $i = 0$.  We get an isomorphism $\Ext^i_{\rGr \Lambda}(M,N) \cong \Ext^1_{\rGr \Lambda}(\Omega^{i-1} M, N)$ inductively from 
the long exact sequence in $\Ext$, see~\cite[Corollary~6.55]{Rotman:homological}.  Thus it suffices to prove that $\Ext^1_{\rGr \Lambda}(M, N) \cong \Hom_{\rGr \Lambda}(\Omega M, N)$.

Beginning with the projective cover 
\[
0 \to \Omega M \to P_M \to M \to 0
\] of $M$, consider the resulting long exact sequence
\[
\begin{tikzcd}[column sep=1em]
    0 \ar[r] & \Hom_{\rGr \Lambda}(M,N) \ar[r] & \Hom_{\rGr \Lambda}(P_M, N) \ar[r, "\alpha"] & \Hom_{\rGr \Lambda}(\Omega M, N) \ar[overlay, dll, out=355,in=175] 
    \\
     & \ar[r] \Ext^1_{\rGr \Lambda}(M,N) \ar[r] & \Ext^1_{\rGr \Lambda}(P_M,N) = 0. & 
\end{tikzcd}
\]
Because $f: P_M \to M$ is a projective cover, $\ker f = \Omega M \subseteq J(P_M)$.  Given $g \colon P_M \to N$ in $\rGr \Lambda$, since $N$ is semisimple we have $J(P_M) \subseteq \ker g$.  Thus $\alpha(g) = g \vert_{\Omega M} = 0$.  Since $\alpha = 0$, 
the connecting morphism now yields an isomorphism $\Hom_{\rGr \Lambda}(\Omega M, N) \cong \Ext^1_{\rGr \Lambda}(M,N)$.

(2) The simple objects in $\rGr \Lambda$ are the shifts of the simple summands of $S = \Lambda_0$, so $N$ is bounded.  By definition, $P_i$ is generated in a single degree for all $i$.  Computing Ext using the minimal graded projective resolution
\[
\cdots \to P_2 \to P_1 \to P_0 \to M \to 0,
\] 
we see that $\Ext^i_{\Lambda}(M, N) \cong \bigoplus_{n \in \mb{Z}} \Ext^i_{\rGr \Lambda}(M, N(n))$ by Lemma~\ref{lem:easy}.  Now apply (1).

(3) In the minimal projective resolution of $M$, $P_i$ is the projective cover of $\Omega^i M$ by construction.  Then $P_i$ is generated in degree $i$ if and only if $\Omega^i M$ is generated in degree $i$.
\end{proof}

Given that $\Omega$ is well-defined on objects, we may attempt to extend it to a functor by defining it on morphisms in the following way. Suppose $f \colon M \to N$ is a morphism in $\rGrb \Lambda$. Fix a lift $\wh{f}$ to the projective covers such that the square on the right commutes:
\begin{equation}\label{eq:Omega defined}
\begin{tikzcd}
0 \ar[r] & \Omega M \ar[d, dashrightarrow, "\Omega(f)"] \ar[r] & P_M \ar[d, "\wh{f}"] \ar[r] & M \ar[d, "f"] \ar[r] & 0 \\
0 \ar[r] & \Omega N \ar[r] & P_N \ar[r] & N \ar[r] & 0 
\end{tikzcd}
\end{equation}
Then we would like to define $\Omega(f)$ as the restriction of $\wh{f}$ to $\Omega M$, whose image necessarily lies in $\Omega N$. However, different choices of the lift $\wh{f}$ may not restrict to the same morphism on the syzygies.  For a simple example, take  $\Lambda = k[x]$ and $M = N = k \oplus k(-1)$, where $k = k[x]/(x)$ is the trivial module. Then $f = \id_M$ is easily seen to have non-unique lifts $\widehat{f}$ restricting to different $\Omega(f)$ in~\eqref{eq:Omega defined}.

The standard fix to the problem above is to work with the projectively stable module category. In this context, the morphism $\Omega(f)$ of~\eqref{eq:Omega defined} is independent of the choice of $\wh{f}$ if it is \emph{viewed as a morphism in the stable category;} see~\cite[\S 5.1.2]{Zimmermann} for details.  In order to apply methods closer to those of~\cite{Guo:prime}, we take a different approach. Namely, we restrict the syzygy operation to a subcategory of graded modules for which the lift $\wh{f}$ is unique, so that $\Omega(f)$ is uniquely determined. This will leave us with a subcategory on which $\Omega$ forms a well-defined functor.

\begin{lemma}
\label{lem:samedegree}
Let $\Lambda$ be a graded ring with $\Lambda_0 = S$ semisimple. Let $M,N \in \rGrb \Lambda$, and let $f \in \Hom_{\rGr \Lambda}(M, N)$. 
If $M$ and $N$ are both generated in degree~$n$, then there is a unique map $\Omega(f)$ completing the commutative diagram~\eqref{eq:Omega defined}.  Furthermore: 
    \begin{enumerate}
        \item If $f$ is injective then so is $\Omega(f)$.
        \item If $f$ is surjective then $\Omega(f)$ is surjective if and only if $\ker f$ is generated in degree $n$.
    \end{enumerate}
\end{lemma}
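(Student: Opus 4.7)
The plan is to extract enough rigidity from the hypothesis that $M$ and $N$ are generated in degree $n$ to pin down the lift $\wh{f}$ uniquely, and then to read off both (1) and (2) from the snake lemma applied to the diagram~\eqref{eq:Omega defined}. The main obstacle I anticipate is the bookkeeping required to verify a concrete tensor-product description of the projective covers and of $\wh{f}$; once that is in hand, the remaining claims follow by routine diagram chasing and the semisimplicity of $S$.

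First I would establish the structural fact that for any $M \in \rGrb \Lambda$ generated in degree $n$, one has $J(M) = M \cdot \Lambda_{\geq 1}$, and the relation $M_{n+d} = M_n \cdot \Lambda_d$ for $d \geq 1$ shows that $M/J(M) \cong M_n$ is concentrated in degree $n$. Consequently $P_M = M_n \otimes_S \Lambda$ with the projective cover map $m \otimes \lambda \mapsto m \cdot \lambda$ equal to the identity on $M_n$ in degree $n$; in particular $(\Omega M)_n = 0$. The same description applies to $N$.

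With this in place, uniqueness of $\wh{f}$ follows because $\wh{f}$ is $\Lambda$-linear and $P_M$ is generated in degree $n$ by $M_n$, while commutativity of the right-hand square of~\eqref{eq:Omega defined}, combined with the fact that $P_N \to N$ is the identity in degree $n$, forces $\wh{f}_n = f_n$. Concretely, $\wh{f} = f_n \otimes_S \id_\Lambda$, and therefore $\Omega(f) = \wh{f}|_{\Omega M}$ is uniquely determined.

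For (1), if $f$ is injective then so is $f_n$; since $S$ is semisimple, $f_n$ splits as a map of right $S$-modules, so $\wh{f} = f_n \otimes_S \id_\Lambda$ remains injective. The snake lemma applied to~\eqref{eq:Omega defined} then yields $\ker \Omega(f) = 0$. For (2), if $f$ is surjective then so is $f_n$, so $\wh{f}$ is surjective with $\ker \wh{f} = (\ker f_n) \otimes_S \Lambda$. The snake lemma produces the exact sequence
\[
\ker \wh{f} \to \ker f \to \coker \Omega(f) \to 0,
\]
in which the first map is the restriction of the projective cover $P_M \to M$. Its image is the submodule $(\ker f_n) \cdot \Lambda = (\ker f)_n \cdot \Lambda$, so $\coker \Omega(f) \cong \ker f / (\ker f)_n \cdot \Lambda$, which vanishes precisely when $\ker f$ is generated in degree $n$.
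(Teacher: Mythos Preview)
Your proof is correct and follows essentially the same approach as the paper: identify the projective covers as $M_n \otimes_S \Lambda$ and $N_n \otimes_S \Lambda$, observe that $\wh{f}$ is forced to be $f_n \otimes_S \id_\Lambda$ because $P_M$ is generated in degree $n$, and then read off (1) and (2) from the snake lemma using the semisimplicity of $S$. Your treatment of~(2) is slightly more explicit than the paper's (you compute the image of $\ker \wh{f} \to \ker f$ directly as $(\ker f)_n \cdot \Lambda$), but the argument is the same.
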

\begin{proof}
Without loss of generality, by shifting we can assume that $n = 0$.  Then $M_0 \otimes_S \Lambda \to M$ is a projective cover of $M$, and a similar comment holds for $N$.  We have a diagram 
\[
\xymatrix{
0 \ar[r] & \Omega M \ar[d]^{\Omega(f)} \ar[r] & M_0 \otimes_S \Lambda  \ar[d]^{\wh{f}} \ar[r] & M \ar[d]^f \ar[r] & 0 \\
0 \ar[r] & \Omega N \ar[r] & N_0 \otimes_S \Lambda  \ar[r] & N \ar[r] & 0.
}
\]
and the lift $\wh{f}$ which makes the right hand square commute is uniquely determined by $f$, since clearly $\wh{f}_0 = f_0: M_0 \to N_0$, and the projective module $M_0 \otimes_S \Lambda$ is also generated in degree $0$.  Therefore the induced map $\Omega(f)$ on the kernels as in~\eqref{eq:Omega defined} is uniquely determined as well.

By the snake lemma, there is an exact sequence 
\[
0 \to \ker \Omega(f) \to \ker \wh{f} \to \ker f \to \coker \Omega(f) \to \coker\wh{f} \to \coker f \to 0.
\]

(1) If $f$ is injective, then the restriction $f_0: M_0 \to N_0$ is also injective.  It follows that $\wh{f}$ is injective, as $S$ is semisimple and so $- \otimes_S \Lambda$ is exact.  Thus $\ker \wh{f} = 0$.
Then $\ker \Omega(f) = 0$ from the snake lemma and so $\Omega(f)$ is also injective.

(2) If $f$ is surjective, then similarly $f_0: M_0 \to N_0$ and hence $\wh{f}$ is also surjective, so $\coker \wh{f} = 0$.  Then from the snake lemma, $\Omega(f)$ is surjective, that is $\coker \Omega(f) = 0$, if and only if the map $\ker \wh{f} \to \ker f$ is surjective.  Note that 
$\ker \wh{f} \cong V \otimes_S \Lambda$, where $V = \ker f_0 \colon M_0 \to N_0$, so $\ker \wh{f}$ is also generated in degree $0$.  
So the map $\ker \wh{f} \to \ker f$ is surjective if and only if $\ker f$ is generated in degree $0$.
\end{proof}

An easy extension of the argument in Lemma~\ref{lem:samedegree} gives the following, whose proof we leave to the reader.
\begin{lemma}
\label{lem:exactpreserved}
Let $\Lambda$ be an $\mb{N}$-graded ring with $\Lambda_0 = S$ semisimple as above, and let 
\[
\begin{tikzcd}
0 \ar[r] & L \ar[r, "g"] & M \ar[r, "f"] & N \ar[r] & 0
\end{tikzcd}
\] 
be an exact sequence in $\rGr \Lambda$.
If $L$, $M$, and $N$ are all generated in degree $n$, then 
    \[
    \begin{tikzcd}
    0 \ar[r] & \Omega L \ar[r, "\Omega(g)"] & \Omega M \ar[r, "\Omega(f)"] & \Omega N \ar[r] & 0
    \end{tikzcd}
    \] 
is also exact.  
\end{lemma}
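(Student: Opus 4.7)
The plan is to reduce to the case $n = 0$ by shifting $L$, $M$, $N$ by $-n$; then the projective covers of $L$, $M$, $N$ take the form $L_0 \otimes_S \Lambda$, $M_0 \otimes_S \Lambda$, $N_0 \otimes_S \Lambda$, respectively. I would apply Lemma~\ref{lem:samedegree} to the morphisms $g$ and $f$ to obtain unique lifts $\widehat{g} \colon L_0 \otimes_S \Lambda \to M_0 \otimes_S \Lambda$ and $\widehat{f} \colon M_0 \otimes_S \Lambda \to N_0 \otimes_S \Lambda$ making the right-hand squares commute, together with the induced restrictions $\Omega(g) \colon \Omega L \to \Omega M$ and $\Omega(f) \colon \Omega M \to \Omega N$. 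By the uniqueness assertion in that lemma, $\widehat{f} \circ \widehat{g}$ coincides with the unique lift of $f \circ g = 0$, which is zero; hence this composite vanishes and, in particular, $\Omega(f) \circ \Omega(g) = 0$.

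Next I would show that the middle row
\[
0 \to L_0 \otimes_S \Lambda \xrightarrow{\widehat{g}} M_0 \otimes_S \Lambda \xrightarrow{\widehat{f}} N_0 \otimes_S \Lambda \to 0
\]
is exact. Taking degree-zero parts of the given short exact sequence in $\rGr \Lambda$ yields a sequence of right $S$-modules $0 \to L_0 \to M_0 \to N_0 \to 0$, where surjectivity on the right holds because $f$ is a degree-preserving surjection. Since $S$ is semisimple, every left $S$-module is flat; in particular, $\Lambda$ is flat as a left $S$-module, so the functor $- \otimes_S \Lambda$ preserves this exactness.

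Finally, I would assemble the $3 \times 3$ diagram whose three columns are the defining short exact sequences for the syzygies $\Omega L$, $\Omega M$, $\Omega N$, whose middle row is the one displayed above, and whose bottom row is the given exact sequence. The columns and the lower two rows are all exact, so the nine lemma (the $3 \times 3$ lemma for abelian categories) implies that the top row
\[
0 \to \Omega L \xrightarrow{\Omega(g)} \Omega M \xrightarrow{\Omega(f)} \Omega N \to 0
\]
is exact as well. I do not anticipate a serious obstacle here: the main subtlety is ensuring that the middle row is a well-defined complex, which is handed to us by the uniqueness of lifts from Lemma~\ref{lem:samedegree}, and the exactness of the middle row itself relies essentially on the semisimplicity of $S$.
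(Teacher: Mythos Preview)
Your argument is correct and is precisely the ``easy extension'' of Lemma~\ref{lem:samedegree} that the paper alludes to (the paper leaves the proof to the reader). Setting up the $3\times 3$ diagram with columns given by the syzygy sequences, establishing exactness of the middle row via flatness of $\Lambda$ over the semisimple ring $S$, and invoking the nine lemma is exactly the intended proof; your care in checking that $\widehat{f}\circ\widehat{g}=0$ via uniqueness of lifts is in fact automatic once the middle row is known to be a complex, but it does no harm.
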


We now see how to formally identify an Ext algebra as an orbital algebra.

\begin{proposition}\label{prop:endofunctor}
Let $\Lambda$ be $\mb{N}$-graded with $S = \Lambda_0$ semisimple.   The assignment $F(M) = \Omega M(1)$ determines an additive functor $F \colon \K(\Lambda) \to \K(\Lambda)$ on the category of Koszul right modules. Furthermore, if $M \in \K(\Lambda)$ is a finitely generated semisimple module, then there is an isomorphism of graded rings
\[
O_{\K(\Lambda)}(F,M) \cong \Ext^\bullet_{\Lambda}(M,M).
\]
\end{proposition}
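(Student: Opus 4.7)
The plan is to verify in turn that $F$ preserves $\K(\Lambda)$ on objects and morphisms, that it is additive, and then to build the ring isomorphism in two stages: first matching underlying graded abelian groups, and then checking that the orbital product corresponds to the Yoneda product. For the object part, if $M \in \K(\Lambda)$ then Lemma~\ref{lem:syzygy}(3) shows every $\Omega^n M$ is generated in degree $n$, so $F(M) = \Omega M(1)$ is generated in degree~$0$. Since projective covers commute with graded shifts, $\Omega^n(F(M)) \cong \Omega^{n+1} M(1)$ is generated in degree $n$, and invoking Lemma~\ref{lem:syzygy}(3) in reverse shows $F(M) \in \K(\Lambda)$.

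For a morphism $f \colon M \to N$ in $\K(\Lambda)$, both source and target are generated in degree~$0$, so by Lemma~\ref{lem:samedegree} there is a \emph{unique} $\Omega(f)$ fitting into diagram~\eqref{eq:Omega defined}; I would set $F(f) := \Omega(f)(1)$. The identities $F(\id) = \id$, $F(gf) = F(g)F(f)$, and $F(f+g) = F(f) + F(g)$ all follow from this uniqueness clause, since the candidate lifts $\wh{\id}$, $\wh{g}\wh{f}$, and $\wh{f} + \wh{g}$ each satisfy the defining commutative square for the corresponding morphism of the bottom row, hence must coincide with the unique lifts. This produces an additive functor $F\colon \K(\Lambda) \to \K(\Lambda)$.

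Now assume $M \in \K(\Lambda)$ is finitely generated semisimple. Being a direct sum of shifted graded simples $T(n)$, each concentrated in degree $-n$, the Koszul requirement that $M$ be generated in degree~$0$ forces $M$ to be concentrated in degree~$0$ (and hence bounded). Lemma~\ref{lem:syzygy}(2) gives $\Ext^i_\Lambda(M,M) \cong \Hom_\Lambda(\Omega^i M, M)$, and Lemma~\ref{lem:easy} decomposes the latter as $\bigoplus_j \Hom_{\rGr \Lambda}(\Omega^i M, M(j))$. Because $\Omega^i M$ is generated in degree $i$ while $M(j)$ sits in degree $-j$, only the summand $j = -i$ survives, yielding
\[
\Ext^i_\Lambda(M,M) \cong \Hom_{\rGr \Lambda}(\Omega^i M, M(-i)) = \Hom_{\rGr \Lambda}(\Omega^i M(i), M) = \Hom_{\K(\Lambda)}(F^i M, M).
\]
Assembling these identifications yields a degree-preserving isomorphism $\Phi \colon O_{\K(\Lambda)}(F,M) \to \Ext^\bullet_\Lambda(M,M)$ of graded abelian groups.

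The remaining task is to check that $\Phi$ is multiplicative. Given $f \in \Hom(F^i M, M)$ and $g \in \Hom(F^j M, M)$, the orbital product $f * g = f \circ F^i(g)$ is a map $F^{i+j} M \to M$, and under $\Phi$ it translates to $\hat f(-j) \circ \hat{F^i(g)}\colon \Omega^{i+j}M \to M(-i-j)$, where $\hat{F^i(g)}\colon \Omega^{i+j} M \to \Omega^i M(-j)$ is the iterated syzygy of $g$ produced by $F^i$. On the Ext side, the Yoneda product of $\hat g \colon \Omega^j M \to M(-j)$ with $\hat f\colon \Omega^i M \to M(-i)$ is computed by lifting $\hat g$ to a chain map of the minimal projective resolution of $M$, restricting to syzygies to obtain a comparison map $\Omega^{i+j} M \to \Omega^i M(-j)$, and post-composing with $\hat f(-j)$. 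The main obstacle is precisely the identification of these two recipes: that ``apply $F$ iteratively'' and ``lift through the minimal resolution'' deliver the same morphism. This is exactly what the uniqueness clause of Lemma~\ref{lem:samedegree} buys, because each step of the resolution lift restricts to a map between modules generated in a single degree, and hence must coincide with the unique $\Omega$ of the previous step. An induction on $i$ then gives $\hat{F^i(g)} =$ the standard Yoneda lift, and $\Phi$ becomes a graded ring isomorphism.
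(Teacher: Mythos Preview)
Your proposal is correct and follows essentially the same approach as the paper's proof: both use Lemma~\ref{lem:syzygy}(3) for the object part, the uniqueness clause of Lemma~\ref{lem:samedegree} for functoriality and additivity, and Lemmas~\ref{lem:syzygy}(2) and~\ref{lem:easy} for the graded abelian group isomorphism. Where the paper simply defers the multiplicative check to the discussion preceding \cite[Theorem~3.1]{Guo:prime}, you actually spell out the argument---that the uniqueness in Lemma~\ref{lem:samedegree} forces the iterated $\Omega$-lift to agree with the standard chain-map lift used in the Yoneda product---which is a genuine addition of detail rather than a different method. One small notational caution: you use $\hat f$ for the shifted morphism $\Omega^i M \to M(-i)$, whereas in diagram~\eqref{eq:Omega defined} the paper reserves $\wh{f}$ for the lift to projective covers; keeping these distinct would avoid possible confusion.
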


\begin{proof}
The assignment $M \mapsto \Omega M(1)$ sends Koszul modules to Koszul modules thanks to Lemma~\ref{lem:syzygy}(3). Since all Koszul modules are generated in degree~0, Lemma~\ref{lem:samedegree}(1) implies that any morphism $\phi$ in $\K(\Lambda)$ has a unique lift $\widehat{\phi}$ to the projective covers. If $f$ and $g$ are composable morphisms in $\K(\Lambda)$, then $\widehat{g \circ f} = \widehat{g} \circ \widehat{f}$ and it follows that
\[
\Omega(g \circ f) = \Omega(g) \circ \Omega(f).
\]
Similarly, if $f_1$ and $f_2$ are morphisms with the same domain and codomain in $\K(\Lambda)$, we have $\widehat{f_1 + f_2} = \widehat{f_1} + \widehat{f_2}$. This implies
\[
\Omega(f_1 + f_2) = \Omega(f_1) + \Omega(f_2).
\]
Thus $F = \Omega(-)(1)$ is an additive endofunctor of $\K(\Lambda)$.

To see the isomorphism of graded rings, first note that applying Lemma~\ref{lem:syzygy}(2) and Lemma~\ref{lem:easy} yields an isomorphism of graded abelian groups 
\begin{align*}
\Ext^i_{\Lambda}(M, M) &\cong \Hom_{\Lambda}(\Omega^i M, M) 
\cong \bigoplus_{n \in \mb{Z}} \Hom_{\rGr \Lambda}(\Omega^i M, M(n)) \\
&= \Hom_{\rGr \Lambda}(\Omega^i M, M(-i)) = \Hom_{\rGr \Lambda}(F^i M(-i), M(-i)) \\
&\cong \Hom_{\rGr \Lambda}(F^i M, M)  =  \Hom_{\K(\Lambda)}(F^i M,M).
\end{align*}    
Here, all summands of the direct sum in the third term are $0$ except when $n = -i$, because $\Omega^i M$ is generated in degree $i$ and $M$ is entirely contained in degree~$0$.
Summing over $i$ we get an isomorphism $O_{\K(\Lambda)}(F, M) \cong \bigoplus_{i \geq 0} \Ext^i_{\Lambda}(M, M)$ as graded abelian groups.
The proof that this is an algebra isomorphism is the same as in the discussion preceding~\cite[Theorem 3.1]{Guo:prime}.
\end{proof}

Using the description of an Ext algebra as an orbital ring, we can now characterize when such rings are piecewise domains or prime.
\begin{theorem}\label{thm:Koszul prime}
Let $\Lambda$ be a graded ring with $S = \Lambda_0$ semisimple. 
Let $F = \Omega(-)(1)$ be the endofunctor of $\K(\Lambda)$ from Proposition~\ref{prop:endofunctor}.  Suppose $M \in \K(\Lambda)$ with 
\[ 
M = S_1 \oplus S_2 \oplus \dots \oplus S_r,
\]
for some Koszul simple right $\Lambda$-modules $S_i$, and let $e_1,\dots,e_r \in \End_\Lambda(M)$ be the idempotents corresponding to this decomposition.
\begin{enumerate} 
\item The following are equivalent: 
\begin{enumerate}[label = \textnormal{(\alph*)}]
\item $\Ext_{\Lambda}^\bullet(M,M)$ is a piecewise domain with respect to $\{e_i \}$;
\item For every $i \geq 0$ and every nonzero morphism $f \colon F^i(S_{\ell}) \to S_m$, the morphisms $F^j(f)$ are surjective for all $j \geq 0$;
\item For every $i \geq 0$ and every nonzero morphism $f \colon F^i(S_{\ell}) \to S_m$, the module $\ker f$ is Koszul. 
 \end{enumerate}
\item Suppose that the equivalent conditions in~\textnormal{(1)} hold, and assume in addition that 
for any $\ell \neq m$ there exists $i \geq 0$ such that 
\[
    \Hom_{\rGr \Lambda}(F^i(S_{\ell}), S_m) \neq 0.
\]
Then $\Ext_{\Lambda}^\bullet(M,M)$ is prime.
\end{enumerate}
\end{theorem}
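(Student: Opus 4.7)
The strategy is to apply Theorem~\ref{thm:Guo piecewise} (together with its primeness criterion) to the orbital ring $O_{\K(\Lambda)}(F,M)$, which by Proposition~\ref{prop:endofunctor} is isomorphic to $\Ext^\bullet_\Lambda(M,M)$. The decisive simplification in our setting is that each summand $S_j$ of $M$ is \emph{simple}, so any nonzero morphism $f \colon F^d(S_j) \to S_\ell$ in $\K(\Lambda)$ is automatically surjective, its image being a nonzero submodule of the simple module $S_\ell$. Thus hypothesis~(i) of Theorem~\ref{thm:Guo piecewise} is free, while hypothesis~(ii) is literally condition~(b). The implication (b)$\Rightarrow$(a) is therefore a direct application of Theorem~\ref{thm:Guo piecewise}.

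For the converse (a)$\Rightarrow$(b), I would argue by contrapositive. If~(b) fails, there exist a nonzero $f \colon F^i(S_\ell) \to S_m$ and $j \geq 0$ such that $F^j(f) \colon F^{i+j}(S_\ell) \to F^j(S_m)$ is not surjective. Since both modules are generated in degree~$0$ (as $F^n = \Omega^n(-)(n)$ applied to Koszul modules), the failure of surjectivity must already appear in the degree-$0$ component, so $\coker(F^j(f))$ has nonzero degree-zero part. Semisimplicity of $S$ then produces a nonzero graded homomorphism $g \colon F^j(S_m) \to S_n$, for some summand $S_n$ of $S$, that vanishes on the image of $F^j(f)$. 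Viewing $g$ and $f$ as homogeneous elements of the orbital ring, they lie in $e_n R e_m$ and $e_m R e_\ell$ respectively, are both nonzero, yet $g \mult f = g \circ F^j(f) = 0$. This contradicts the piecewise domain property, proving $\neg$(a).

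The equivalence (b)$\Leftrightarrow$(c) is the technical heart of the proof. Given a nonzero $f \colon F^i(S_\ell) \to S_m$, simplicity of $S_m$ forces $f$ to be surjective, producing a short exact sequence $0 \to \ker f \to F^i(S_\ell) \to S_m \to 0$ in which all three terms are generated in degree~$0$. By Lemma~\ref{lem:samedegree}(2), $F(f) = \Omega(f)(1)$ is surjective if and only if $\ker f$ is generated in degree~$0$. Under that condition, Lemma~\ref{lem:exactpreserved} applied to the sequence and shifted by $(1)$ yields the short exact sequence $0 \to F(\ker f) \to F^{i+1}(S_\ell) \to F(S_m) \to 0$, identifying $\ker F(f) \cong F(\ker f) = \Omega(\ker f)(1)$. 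Iterating this argument — at each step applying Lemma~\ref{lem:samedegree}(2) to the map $F^n(f)$ between degree-$0$-generated modules, then Lemma~\ref{lem:exactpreserved} to propagate the short exact sequence — one sees that $F^j(f)$ is surjective for every $j \geq 0$ precisely when $\Omega^j(\ker f)$ is generated in degree~$j$ for every~$j$, which by Lemma~\ref{lem:syzygy}(3) is exactly the Koszulity of $\ker f$.

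Finally, part~(2) is immediate: under the assumptions of~(1) together with the added Hom-nonvanishing hypothesis, all three hypotheses~(i), (ii), (iii) of Theorem~\ref{thm:Guo piecewise} are satisfied, and primeness of $\Ext^\bullet_\Lambda(M,M)$ follows. The main obstacle throughout is the bookkeeping for grading shifts when iterating $F = \Omega(-)(1)$ and confirming that the syzygy lemmas apply at each inductive step; once that is handled cleanly, the result is a formal consequence of the orbital-algebra machinery established in Section~\ref{sec:orbital}.
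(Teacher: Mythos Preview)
Your proof is correct and follows essentially the same route as the paper's: identify $\Ext^\bullet_\Lambda(M,M)$ with the orbital ring via Proposition~\ref{prop:endofunctor}, obtain (b)$\Rightarrow$(a) directly from Theorem~\ref{thm:Guo piecewise}, prove (a)$\Rightarrow$(b) by producing a simple quotient $g$ of $F^j(S_m)$ that kills the image of $F^j(f)$ and hence gives $g*f=0$, and establish (b)$\Leftrightarrow$(c) by the same inductive use of Lemmas~\ref{lem:samedegree}(2) and~\ref{lem:exactpreserved} to track when $\Omega^j(\ker f)$ is generated in degree~$j$. One small wording slip: in your short exact sequence you assert ``all three terms are generated in degree~$0$'' before this has been shown for $\ker f$, but your very next sentence correctly treats that as the condition equivalent to surjectivity of $F(f)$, so the logic is intact.
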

\begin{proof}
Let $\K(\Lambda)$ be the additive category of Koszul left $\Lambda$-modules.  By Proposition~\ref{prop:endofunctor}, the Ext algebra $\Ext_\Lambda^\bullet(M,M)$ is isomorphic to the orbital ring  $O_{\K(\Lambda)}(F,M)$, so we can work with the orbital ring instead of the Ext algebra throughout the proof. Let us verify the equivalences in (1):

(b)$\implies$(a): Since $S_m$ is simple, every nonzero morphism $f \colon F^i(S_{\ell}) \to S_m$ is surjective.  Together with~(b) this verifies conditions (i)--(ii) of Theorem~\ref{thm:Guo piecewise}, so $O_{\K(\Lambda)}(F,M)$ is a piecewise domain with respect to the idempotent decomposition of $1 \in \End_\Lambda(M)$ induced by the given expression of $M$ as a direct sum of simple right modules.  

(a)$\implies$(b): 
Fix $0 \neq f \in \Hom_{\rGr \Lambda}(F^i(S_{\ell}),S_m)$, and let $j \geq 0$.  Suppose that $F^j(f) \in \Hom_{\rGr \Lambda}(F^{i+j}(S_{\ell}), F^j(S_m))$ is not surjective.  Since $F^j(S_m)$ is generated in degree $0$ ($S_m$ being Koszul), the image of $F^j(f)$ does not contain all of $(F^j(S_m))_0$.  It follows that 
we can choose a maximal $S$-submodule $P$ of $(F^j(S_m))_0$ so that the $\Lambda$-submodule $N =P + (F^j(S_m))_{\geq 1}$ of $F^j(S_m)$ contains the image of $F^j(f)$.  Now $F^j(S_m)/N$ is a simple module, so it must be isomorphic to some $S_p$.  Let $g: F^j(S_m) \to S_p$ be the corresponding surjection.  Then by construction $g * f = g \circ F^j(f) = 0$.  This contradicts that $O_{\K(\Lambda)}(F,S)$ is a piecewise domain.  So $F^j(f)$ is surjective.

(b)$\implies$(c): Fix $0 \neq f \colon F^i(S_{\ell}) \to S_m$, so necessarily $f$ is surjective.  Suppose according to (b) that $F^j(f)$ is surjective for all $j \geq 0$.   It follows by shifting degree that each $\Omega^j(f)$ is also surjective. In particular, since $\Omega(f)$ is surjective, Lemma~\ref{lem:samedegree} implies that $K:= \ker f$ is generated in degree~0. It follows from Lemma~\ref{lem:exactpreserved} and shifting that applying $F$ yields an exact sequence
\[
\begin{tikzcd}
0 \ar[r] & F(K) \ar[r] & F^{i+1}(S_{\ell}) \ar[r, "F(f)"] & F(S_m) \ar[r] &0,
\end{tikzcd}
\]
where the last two terms are generated in degree $0$.  
Since $F^2(f)$ and hence $\Omega(F(f))$ is surjective, Lemma~\ref{lem:samedegree} again implies that $\ker F(f) = F(K)$ is generated in degree~0.  We may proceed inductively to show that each $F^j(K)$ is generated in degree~$0$. This means that $K = \ker f$ is Koszul, as desired.

(c)$\implies$(b):  This follows from a similar inductive argument using  Lemma~\ref{lem:samedegree} as in the previous paragraph.

Finally, (2) is just a restatement of the last part of Theorem~\ref{thm:Guo piecewise}.
\end{proof}

We close this section by adapting Theorem~\ref{thm:Koszul prime} to formulate criteria under which a Koszul ring is a domain or prime.
Let $R$ be an $\mb{N}$-graded ring with $R_0 = S$ semisimple.  Following \cite{BGS}, we say that $R$ is a \emph{Koszul ring} if the graded left $R$-module $S = \Lambda/\Lambda_{\geq 1}$ is Koszul.  By \cite[Proposition 2.2.1]{BGS}, a ring is Koszul if and only if its opposite ring is.  Thus $R$ is Koszul if and only $S$ is a Koszul right module.
Writing $S = S_1 \oplus \dots \oplus S_r$ for some simple right $\Lambda$-modules $S_i$, note that $S$ is a Koszul module if and only if every $S_i$ is a Koszul module.  

If $R$ is Koszul, then $R$ is generated by $R_1$ as a ring over $S$ \cite[Proposition 2.3.1]{BGS}.  
While \cite{BGS} works in greater generality, we will only consider Koszul rings $R$ such that $R_1$ is finitely generated as an $S$-module on both the left and the right; thus $R_n$ is a finitely generated left and right $S$-module for all $n \geq 0$.
When $R$ is Koszul define 
\[
E(R) = \bigoplus_{i \geq 0} \Ext^i_{R \lMod}(S, S) 
\] 
to be the Ext algebra of the left $R$-module $S$. 
Then $E(R)$ is Koszul with $E(R)_0 \cong S = R_0$, and
\begin{equation}\label{eq:Koszul dual}
R \cong \bigoplus_{i \geq 0}\Ext^i_{\rMod E(R)}(S,S)
\end{equation}
by a right-sided version
of \cite[Theorem 1.2.5]{BGS}, as explained in Appendix~\ref{sec:appendix}.
The ring $E(R)$ is called the \emph{Koszul dual} of $R$.

Our primeness criterion is governed by strong connectedness of a quiver.
The following is not standard but we use it out of convenience, especially because it specializes to the usual underlying quiver for the algebras we consider in Section~\ref{sec:application}. 

\begin{definition}
\label{def:quivers}
Let $R$ be a ring as above.  Fix a decomposition $1 = e_1 + \dots + e_r$ of $1$ as a sum of primitive orthogonal idempotents $e_i \in S$.  We define the \emph{underlying quiver} $Q = Q(R)$ of $R$ as follows.  The vertices in $Q$ are $\{1, \dots, r \}$.  For $1 \leq i, j \leq r$, the number of arrows from $i$ to $j$ in $Q$ is the dimension of $e_j R_1 e_i$ as a right 
$D_i = e_i S e_i = \End_S(e_iS)$-module.  
\end{definition}

It is easy to see that the underlying quiver of $R$ is independent of the choice of primitive idempotent decomposition.  
One could alternatively define $Q(R)$ by looking at the \emph{left} vector space dimension of the corners of $R$. But we are only concerned with strong connectedness of this quiver, and since both such quivers reduce to the same simple directed graph after replacing multiple arrows by a single arrow, their strong connectedness is equivalent.
(It would be arguably more appropriate to consider an underlying species associated to a basic representative of $R$, but we wish to avoid such technicalities here.)

\begin{theorem}
\label{thm:Koszul and strongly connected}
Let $R$ be an $\mb{N}$-graded ring for which $S = R_0 = R/R_{\geq 1}$ is semisimple and $R_1$ is finitely generated as a left and right $S$-module.  Suppose that $R$ is Koszul and let $\Lambda = E(R)$ be the Koszul dual ring.  Then 
\begin{enumerate}
\item $R$ is a piecewise domain (with respect to any primitive idempotent decomposition $1 = e_1 + e_2 + \dots + e_r$ for $e_i \in S$) if and only if $\Lambda$ satisfies the Koszul syzygy condition (Definition~\ref{def:Koszulsyzygy}).
\item Let $R$ be a piecewise domain as in (1).  Then $R$ is prime if and only if the underlying quiver $Q(R)$ is strongly connected.
\item $R$ is a domain if and only if $S$ is a division ring and $\Lambda$ satisfies the Koszul syzygy condition.
\end{enumerate}
\end{theorem}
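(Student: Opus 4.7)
The plan is to reduce the theorem to Theorem~\ref{thm:Koszul prime} by identifying $R$ with an orbital ring on the Koszul side. Combining \eqref{eq:Koszul dual} with Proposition~\ref{prop:endofunctor} applied to $\Lambda$ and to the right $\Lambda$-module $S = S_1 \oplus \cdots \oplus S_r$ yields graded-ring isomorphisms
\[
R \;\cong\; \bigoplus_{i \geq 0} \Ext^i_{\rMod \Lambda}(S,S) \;\cong\; O_{\K(\Lambda)}(F, S),
\]
where $F = \Omega(-)(1)$ is the syzygy-shift endofunctor on Koszul right $\Lambda$-modules from Proposition~\ref{prop:endofunctor}. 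Under this identification, the primitive idempotent decomposition $1 = e_1 + \cdots + e_r$ in $S$ corresponds to the decomposition of $S$ into Koszul simple right $\Lambda$-modules, and the corner identification \eqref{eq:corner} from the proof of Theorem~\ref{thm:Guo piecewise} gives
\[
e_m R_i e_\ell \;\cong\; \Hom_{\rGr \Lambda}(F^i(S_\ell), S_m)
\]
for all $i \geq 0$ and $1 \leq \ell, m \leq r$.

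For part~(1), since $F^i(S_j) = \Omega^i(S_j)(i)$ by the definition of $F$, the Koszul syzygy condition on $\Lambda$ is literally condition~(c) of Theorem~\ref{thm:Koszul prime}(1) (the case of the zero morphism is automatic because $F^i(S_j)$ is itself Koszul). Part~(1) then follows directly from that theorem.

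Part~(2) follows from Theorem~\ref{thm:Koszul prime}(2) once its hypothesis $\Hom_{\rGr \Lambda}(F^i(S_\ell), S_m) \neq 0$ is translated into $e_m R_i e_\ell \neq 0$ via the corner identification above. The main obstacle is then to prove, for the piecewise domain $R$, the equivalence
\[
e_m R_{\geq 1} e_\ell \neq 0 \text{ for all } \ell \neq m \iff Q(R) \text{ is strongly connected}.
\]
For the ``$\Leftarrow$'' direction, a directed path $\ell = v_0 \to v_1 \to \cdots \to v_d = m$ in $Q(R)$ yields nonzero elements $x_k \in e_{v_{k+1}} R_1 e_{v_k}$ by the definition of $Q(R)$, and the piecewise domain hypothesis forces their product to be a nonzero element of $e_m R_d e_\ell$. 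For ``$\Rightarrow$'', since $R$ is Koszul we have $R_i = R_1^i$, so every nonzero element of $e_m R_i e_\ell$ is a sum of monomials $e_m r_1 e_{v_1} r_2 e_{v_2} \cdots e_{v_{i-1}} r_i e_\ell$ with $r_k \in R_1$; at least one such summand must be nonzero, producing a path from $\ell$ to $m$ in $Q(R)$.

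Part~(3) reduces to part~(1) upon noting that a piecewise domain with a single primitive idempotent is simply a domain, while a semisimple ring $S$ admits a trivial primitive decomposition $1 = e_1$ if and only if $S$ is a division ring. The bookkeeping subtlety to watch out for throughout is the left/right convention: we apply Theorem~\ref{thm:Koszul prime} on the \emph{right} $\Lambda$-module side in order to match the right-handed version \eqref{eq:Koszul dual} of Koszul duality, whose justification is the content of Appendix~\ref{sec:appendix}.
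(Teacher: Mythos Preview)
Your proposal is correct and follows essentially the same approach as the paper's proof: identify $R$ with $\Ext^\bullet_{\rMod \Lambda}(S,S)$ via \eqref{eq:Koszul dual}, then invoke Theorem~\ref{thm:Koszul prime} for part~(1), argue part~(2) by the piecewise-domain primeness criterion together with the fact that $R$ is generated by $R_1$ over $S$, and reduce part~(3) to~(1). The only difference worth noting is that for part~(2) the paper works directly with $R$ via Remark~\ref{rem:piecewise prime}, whereas you detour through Theorem~\ref{thm:Koszul prime}(2) and the corner identification before arriving at the same path argument in $Q(R)$; the paper's route is slightly more economical but the content is identical.
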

\begin{proof}
(1) The ring $\Lambda$ is also Koszul, so it satisfies the hypothesis of Theorem~\ref{thm:Koszul prime}.  By the equivalence of (1a) and (1c) in that result, $\Ext_{\rMod \Lambda}^\bullet(S,S)$ is a piecewise domain if and only if for all $1 \leq j, \ell \leq r$ and $i \geq 0$, every graded $\Lambda$-module homomorphism $f \colon \Omega^i(S_j)(i) \to S_\ell$ has Koszul kernel.  This is precisely the Koszul syzygy condition.  Now by~\eqref{eq:Koszul dual} we have $R \cong \Ext_{\rMod \Lambda}^\bullet(S,S)$.  Note that condition (1c) in Theorem~\ref{thm:Koszul prime} depends only on the set of isomorphism classes of simple modules, which is independent of how $S$ is expressed as a direct sum of simple modules.  Thus the piecewise domain property is independent of the primitive idempotent decomposition of $1$ in $\End_S(S)$. The 

(2)  By Remark~\ref{rem:piecewise prime}, $R$ is prime if and only if given distinct indices $i \neq j$ we have  $e_j R e_i \neq 0$.  Since $R$ is Koszul, $R$ is generated as a ring by $R_1$ over $S$.  Thus an element of $e_j R_n e_i$ is a sum of products of the 
form $x_n x_{n-1} \dots x_1$, where $x_{\alpha} \in f_{\alpha+1} R_1 f_{\alpha}$ for some sequence of idempotents $f_0, f_1, \dots, f_{n+1} \in \{e_1, \dots, e_r\}$ such that $f_0 = e_i$ and $f_{n+1} = e_j$.  The definition of $Q$ implies that 
there exists at least one arrow from $i$ to $j$ in $Q$ if and only if $e_j R_1 e_i \neq 0$.  If $Q$ is strongly connected, then for any $i, j$ there is a path from $i$ to $j$ of some length $n$, which produces such a sequence of idempotents $f_{\alpha}$ and nonzero elements $0 \neq x_{\alpha} \in f_{\alpha+1} R_1 f_{\alpha}$.  Then $0 \neq x_nx_{n-1} \dots x_1 \in e_j R e_i$
by the piecewise domain property.  Conversely, if $Q$ is not strongly connected because there is no path $i \to j$, then the description above shows that $e_j R_n e_i = 0$ for all $n$, so that $e_j R e_i = 0$.

(3) If $R$ is a domain, then $S = R_0$ is a semisimple domain---hence a division ring---and by part~(1) $\Lambda$ satisfies the Koszul syzygy condition. Conversely, if $S$ is a division ring and $\Lambda$ is satisfies the Koszul syzygy condition, then $R$ is a piecewise domain with respect to $\{1\}$, which means that $R$ is a domain.
\end{proof}

\begin{remark}
    \label{rem:other side}
Definition~\ref{def:Koszulsyzygy} might properly be called the \emph{right} Koszul syzygy condition for $\Lambda$, as it depends on properties of the syzygies of the simple right modules.  In the context of Theorem~\ref{thm:Koszul and strongly connected}, we see that this is equivalent to $E(\Lambda)$ being a piecewise domain, which is a right/left symmetric condition.  It follows that for Koszul rings $\Lambda$ the left and right Koszul syzygy conditions are equivalent, which is why we simply called this the Koszul syzygy condition.
\end{remark}

\section{Graded quasi-Frobenius rings}
\label{sec:graded QF}


By Theorem~\ref{thm:Koszul and strongly connected}, in order to prove a Koszul ring $R$ is a piecewise domain (or prime), we want to understand 
when its dual $\Lambda = E(R)$ satisfies the Koszul syzygy condition.   We now investigate this condition for the special class of quasi-Frobenius rings $\Lambda$.

Let $\Lambda$ be an $\N$-graded ring, with $\Lambda_0 = S$ semisimple.  We are interested in the case that $\Lambda$ is artinian.  Thus there exists $d \geq 0$ such that $\Lambda_i = 0$ for $i > d$, whence
\begin{equation}\label{eq:graded length}
\Lambda = \Lambda_0 \oplus \Lambda_1 \oplus \cdots \oplus \Lambda_d
\end{equation}
Choose $d$ minimal such that $\Lambda_d \neq 0$. In this case, the \emph{graded length} of $\Lambda$ is equal to~$d+1$. 

We say that $\Lambda$ (graded and artinian, as above) is \emph{quasi-Frobenius} if it is self-injective as a right $\Lambda$-module (among many equivalent characterizations~\cite[\S 15]{Lam:Lectures}).  It is equivalent to require $\Lambda$ to be injective in the category of all modules or in the category of graded modules \cite[Proposition~3.4]{DNN:Frobenius}.  

\begin{remark}\label{rem:qF}
We recall some structural facts about graded quasi-Frobenius rings.  Let $1 = e_1 + \cdots + e_r$ be a sum of primitive orthogonal idempotents in $\Lambda_0 = S$, and denote $P_i = e_i \Lambda$ and $S_i = e_i S \cong P_i / \rad(P_i)$.  
As shown, for example in~\cite[Section~4]{DNN:Frobenius}, we have the following:
\begin{itemize}
\item The graded projective and graded injective $\Lambda$-modules coincide.
\item The modules $P_i$ represent all graded projective (resp., injective) indecomposable right $\Lambda$-modules up to isomorphism.  
\item $\soc(P_i)$ is simple, and every graded simple occurs (up to shift) as the socle of some $P_i$. 
\item The functors $\Hom_{\rmod \Lambda}(-,\Lambda)$ and $\Hom_{\Lambda \lmod}(-,\Lambda)$ are mutually quasi-inverse, forming a duality between finitely generated graded right and left $\Lambda$-modules.
\end{itemize}
\end{remark}

We are primarily interested here in graded quasi-Frobenius algebras $\Lambda$ such that $\soc(\Lambda_{\Lambda}) = \Lambda_d$, which forces all of the indecomposable projectives  $P_i = e_i \Lambda$ to have socle in degree $d$ as well.  Let $M \in \Lambda \lgr$.  Suppose that $M$ is generated in degree $0$, so that 
its projective cover has the form $P_M = \oplus_{i=1}^m \Lambda e_{n_i} \to M$.  Applying the duality functor $\Hom_{\Lambda}(-, \Lambda)$, we see that $M^{\vee}: = \Hom_{\Lambda}(M, \Lambda) \to \oplus_{i=1}^m e_{n_i} \Lambda$ must be an injective hull in $\rgr \Lambda$.  In particular, since each $e_{n_i} \Lambda$ has socle in degree $d$, $M^{\vee}$ has socle entirely in degree $d$.  Conversely, by applying the inverse functor, if $M^{\vee}$ has socle entirely in degree $d$ then $M$ is generated in degree $0$.  By shifting we see that a left module $M$ is generated in a single degree $n$ if and only if its dual right module $M^{\vee}$ has socle in a single degree $d-n$.

Now a number of results from Section~\ref{sec:Koszul modules} about syzygies dualize immediately to results about cosyzygies, as follows.  For example, by dualizing the construction of 
$\Omega$, we see that for any $M \in \rgr \Lambda$ there is an exact sequence 
\[
0 \to M \to E(M) \to \Omega^{-1}(M) \to 0
\]
where $E(M)$ is the graded injective hull of $M$.  In particular $\soc(M) = \soc E(M)$.  This exact sequence is determined up to isomorphism by $M$, and the \emph{cosyzygy module} $\Omega^{-1}(M)$ is well-defined up to isomorphism. 

The next result follows immediately from applying the duality $\Hom_{\Lambda}(-, \Lambda)$ to the (left module versions of) Lemmas~\ref{lem:syzygy} and Lemma~\ref{lem:exactpreserved} for finitely generated modules, using that having a socle in a single degree dualizes to being generated in a single degree.  
\begin{lemma}
\label{lem:cosyzygy}
Let $\Lambda$ be graded quasi-Frobenius with $\Lambda_0 = S$ semisimple and $\soc(\Lambda) = \Lambda_d$. 
\begin{enumerate}
\item If $M, N \in \rgr \Lambda$ both have socles contained in the same degree~$n$, then there is a uniquely determined morphism $\Omega^{-1}(f) \in \Hom_{\rGr \Lambda }(\Omega^{-1} M, \Omega^{-1} N)$. Furthermore:
\begin{enumerate}[label = \textnormal{(\alph*)}]
\item If $f$ is surjective then so is $\Omega^{-1}(f)$.
\item If $f$ is injective, then $\Omega^{-1}(f)$ is injective if and only if $\coker f$ has socle contained in degree~$n$.
\end{enumerate}
\item If $0 \to L \overset{g}{\to} M \overset{f}{\to} N \to 0$ is a short exact sequence in $\rgr \Lambda$ and all of $L$, $M$, and $N$ have socles in degree~$n$, then
\[
\begin{tikzcd}
0 \ar[r] & \Omega^{-1} L \ar[r, "\Omega^{-1} g"] & \Omega^{-1} M \ar[r, " \Omega^{-1} f"] &  \Omega^{-1} N \ar[r] & 0
\end{tikzcd}
\]
is also exact.
\end{enumerate}
\end{lemma}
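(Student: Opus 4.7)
The plan is to apply the graded duality $D := \Hom_{\Lambda}(-,\Lambda)$ (last bullet of Remark~\ref{rem:qF}) to each statement, translating it into the corresponding syzygy statement for finitely generated left modules already established in Lemmas~\ref{lem:samedegree} and~\ref{lem:exactpreserved}, and then pulling the conclusion back through $D$. The key dictionary is the following: because $\soc(\Lambda) = \Lambda_d$, every indecomposable graded projective-injective $e_i\Lambda$ has its socle concentrated in degree $d$, and the graded injective hull of a right module with socle in degree $n$ is a finite direct sum of shifts $e_i\Lambda(n-d)$. Applying $D$ turns this into a direct sum of shifts $\Lambda e_i(d-n)$, which is a projective cover generated in degree $d-n$. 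Hence a finitely generated graded right module $N$ has $\soc(N)$ concentrated in degree $n$ if and only if $D(N)$ is generated in degree $d-n$, and $D$ sends the injective hull sequence for $N$ to the projective cover sequence for $D(N)$, giving a canonical identification $D(\Omega^{-1} N) \cong \Omega\,D(N)$ (up to the obvious shift).

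For part~(1), fix $f \colon M \to N$ with both socles in degree $n$. Applying $D$ gives $D(f) \colon D(N) \to D(M)$ between finitely generated left modules both generated in degree $d-n$. By (the left-module version of) Lemma~\ref{lem:samedegree}, there is a unique lift $\Omega\,D(f)$ to the projective covers. Applying $D$ again and using the identification above, we obtain a uniquely determined morphism $\Omega^{-1}(f) \colon \Omega^{-1}M \to \Omega^{-1}N$. For~(1a), if $f$ is surjective then $D(f)$ is injective, so Lemma~\ref{lem:samedegree}(1) says $\Omega\,D(f)$ is injective, whence $\Omega^{-1}(f) \cong D(\Omega\, D(f))$ is surjective. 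For~(1b), if $f$ is injective then $D(f)$ is surjective with kernel $D(\coker f)$; by Lemma~\ref{lem:samedegree}(2), $\Omega\,D(f)$ is surjective if and only if $D(\coker f)$ is generated in degree $d-n$, which by the dictionary translates to $\coker f$ having socle in degree $n$. Applying $D$ once more converts surjectivity of $\Omega\,D(f)$ into injectivity of $\Omega^{-1}(f)$.

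For part~(2), apply $D$ to the given short exact sequence to obtain an exact sequence $0 \to D(N) \to D(M) \to D(L) \to 0$ in which all three terms are generated in the common degree $d-n$. Lemma~\ref{lem:exactpreserved} applied on the left yields exactness of
\[
0 \to \Omega\,D(N) \to \Omega\,D(M) \to \Omega\,D(L) \to 0,
\]
and a final application of $D$, combined with the identification $D(\Omega^{-1}(-)) \cong \Omega\,D(-)$, produces the desired exact sequence of cosyzygies.

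The main obstacle I expect is not the logical structure, which is a straightforward dualization, but rather the careful bookkeeping of graded shifts: one must verify that $D$ exchanges ``socle concentrated in degree $n$'' with ``generated in degree $d-n$'' at the level of morphisms (not merely objects), and that the functorial identification of $D(\Omega^{-1}N)$ with $\Omega\,D(N)$ is compatible with morphisms in such a way that the uniqueness assertion of Lemma~\ref{lem:samedegree} transports cleanly. Once the shift conventions and the equivalence induced by $D$ on the relevant subcategories (right modules with socle in a prescribed degree versus left modules generated in the corresponding degree) are spelled out, each of~(1a), (1b), and~(2) reduces mechanically to the already-proven left-handed statement.
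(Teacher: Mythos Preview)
Your proposal is correct and follows essentially the same approach as the paper: the paper's entire argument is the single sentence that the lemma follows by applying the duality $\Hom_\Lambda(-,\Lambda)$ to the left-module versions of Lemmas~\ref{lem:samedegree} and~\ref{lem:exactpreserved}, using that ``socle in a single degree'' dualizes to ``generated in a single degree.'' You have simply spelled out this dualization (including the shift dictionary the paper records just before the lemma) in more detail.
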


For any object $M$ in $\rGr \Lambda$ we can define the objects $FM = (\Omega M)(1)$ and $F^{-1}M = (\Omega^{-1}M)(-1)$, with the caveat that $F$ and $F^{-1}$ act only on objects in general; given a morphism $f: M \to N$ then we consider $F(f)$ only if $M$ and $N$ are generated in the same degree, and $F^{-1}(f)$ only if $M$ and $N$ have socle in the same degree.  The suggestive notation $F^{-1}$ comes from the fact that if one passes to the stable category (where one mods out by morphisms that factor through projective modules), then $F$ and $F^{-1}$ become actual quasi-inverse functors.  Consequently, as long as one avoids projective modules these are inverse operations on objects:
\begin{lemma} 
\label{lem:F-props}
Let $\Lambda$ be a graded quasi-Frobenius ring with $\Lambda_0 = S$ semisimple and $\soc(\Lambda) = \Lambda_d$.  Let $X(\Lambda)$ be the set of objects in $\rGr \Lambda$ without projective summands (equivalently, without injective summands).
\begin{enumerate}
    \item  If $M \in X(\Lambda)$ we have $F(F^{-1}(M)) \cong M$ and $F^{-1}(F(M)) \cong M$.
    \item If $M \in X(\Lambda)$ is indecomposable then so are $F(M)$ and $F^{-1}(M)$. 
\end{enumerate}
\end{lemma}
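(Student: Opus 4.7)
\textbf{Proposal for Lemma \ref{lem:F-props}.}

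The plan is to establish (1) by proving the stronger claim that for $M \in X(\Lambda)$, the projective cover $0 \to \Omega M \to P_M \to M \to 0$ simultaneously serves as the injective envelope of $\Omega M$, and dually, the injective envelope $0 \to M \to E(M) \to \Omega^{-1}M \to 0$ simultaneously serves as the projective cover of $\Omega^{-1}M$. Once this is in hand, we immediately read off $\Omega^{-1}(\Omega M) = P_M/\Omega M = M$ and $\Omega(\Omega^{-1}M) = \ker(E(M) \twoheadrightarrow \Omega^{-1}M) = M$. Because $\Omega$ and $\Omega^{-1}$ commute with the shift functor, the shifts in $F$ and $F^{-1}$ cancel and we obtain $F^{-1}F(M) \cong M$ and $FF^{-1}(M) \cong M$.

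To prove $P_M = E(\Omega M)$: since $\Lambda$ is quasi-Frobenius, $P_M$ is injective (Remark~\ref{rem:qF}). Injectivity lets us extend the essential inclusion $\Omega M \hookrightarrow E(\Omega M)$ to a morphism $\iota \colon E(\Omega M) \to P_M$, and since $\iota$ restricts to an injection on the essential submodule $\Omega M$, the map $\iota$ itself is injective. Because $E(\Omega M)$ is injective, $\iota$ splits off a summand: $P_M = E(\Omega M) \oplus X$ where $X$ is injective, hence projective by the QF property. Since $\Omega M$ is contained in the summand $E(\Omega M)$, passing to the quotient yields $M \cong (E(\Omega M)/\Omega M) \oplus X$, exhibiting $X$ as a projective summand of $M$. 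The hypothesis $M \in X(\Lambda)$ forces $X = 0$, proving the claim. The dual argument establishes $E(M) = P_{\Omega^{-1}M}$: lift the surjection $E(M) \twoheadrightarrow \Omega^{-1}M$ through the projective cover $P_{\Omega^{-1}M} \twoheadrightarrow \Omega^{-1}M$ using the projectivity of $E(M)$, observe that the lift is a split surjection whose kernel is contained in $M \subseteq E(M)$ and is projective (being a summand of $E(M)$), and conclude this kernel vanishes since $M$ has no projective summands.

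For (2), I first check that $F$ and $F^{-1}$ preserve $X(\Lambda)$: a nonzero projective summand $P' \subseteq \Omega M$ would, by QF, split off $P_M$ as an injective summand, and then the complementary summand would already surject onto $M$, contradicting the minimality of the projective cover of $M$; the argument for $F^{-1}$ is dual, using minimality of the injective hull. Moreover, $F$ and $F^{-1}$ distribute over direct sums on objects, since projective covers and injective hulls are additive. Now suppose $M \in X(\Lambda)$ is indecomposable and $F(M) = A \oplus B$. Then $A, B \in X(\Lambda)$, and by part (1),
\[
M \cong F^{-1}F(M) \cong F^{-1}(A) \oplus F^{-1}(B).
\]
Indecomposability of $M$ forces one summand, say $F^{-1}(B)$, to vanish; then $B \cong FF^{-1}(B) = F(0) = 0$, showing $F(M)$ is indecomposable. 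The proof for $F^{-1}(M)$ is symmetric.

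The main technical obstacle is the essentiality argument underlying the first half of (1): one must convert the hypothesis that $M$ has no projective summand into the statement that $\Omega M$ leaves no injective ``room'' inside $P_M$. The QF hypothesis is what makes this conversion possible, since it is precisely the identification of projectives with injectives that allows a piece of the injective envelope of $\Omega M$ to split off as a projective summand of $M$.
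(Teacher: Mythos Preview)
Your proof is correct and follows essentially the same route as the paper: for (1) you show that $P_M$ serves as the injective hull of $\Omega M$ by splitting off $E(\Omega M)$ inside $P_M$ and identifying the complement with a projective summand of $M$, which is exactly the paper's argument. For (2) the paper merely writes ``easy consequence of (1),'' so your explicit argument (preservation of $X(\Lambda)$ plus additivity and the inverse relations from (1)) is a welcome elaboration rather than a departure.
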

\begin{proof}
(1) Let $M \in X(\Lambda)$ and take the exact sequence $0 \to \Omega M \overset{f}{\to} P_M \overset{g}{\to} M \to 0$ for a graded projective cover $g$. Then $P_M$ is also injective, so there is a submodule $E \subseteq P_M$ such that $\Omega M\to E$ is a graded injective hull of $\Omega M$.  Since $E$ is injective we have $P_M = E \oplus Q$ for some complement $Q$, and 
$g \vert_Q: Q \to M$ is an injection because $f(\Omega M) = \ker(g) \subseteq E$.  Then $g(Q)$ is a projective-injective subobject of $M$ and hence a projective summand of $M$.  This forces $Q = 0$ and so $f: \Omega M \to P_M$ is an injective hull.  It follows that $\Omega^{-1}(\Omega M) \cong M$ and putting in the shifts, $F^{-1}(F M) \cong M$.  A dual argument shows that $\Omega(\Omega^{-1} M) \cong M$ and $F(F^{-1} M) \cong M$.

(2) This is an easy consequence of (1).
\end{proof}

The following class of modules is convenient to focus in our further analysis.
\begin{definition}
Let $\Lambda = \Lambda_0 \oplus \cdots \oplus \Lambda_d$ be an $\mb{N}$-graded ring of graded length~$d+1$. A module $M \in \rGr \Lambda$ is \emph{compressed} if its graded decomposition is of the form
\[
M = M_0 \oplus \cdots \oplus M_{d-1},
\]
or equivalently, $M$ is positively graded with $M_i = 0$ for $i \geq d$.
\end{definition}

\begin{lemma}
\label{lem:F-props2}
Let $\Lambda$ be a graded quasi-Frobenius ring with $\Lambda_0 = S$ semisimple and $\soc(\Lambda) = \Lambda_d$.  Let $M \in \rGr \Lambda$ be compressed.  
\begin{enumerate}
\item If $M$ is generated in degree~0, then $F(M)$ is compressed and has socle in degree $d-1$.  
\item If $M$ has socle in degree $d-1$, then $F^{-1}(M)$ is compressed and is generated in degree $0$.  
    \end{enumerate}
\end{lemma}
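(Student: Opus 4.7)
My plan is to prove~(1) by analyzing the minimal projective cover sequence $0 \to \Omega M \to P_M \to M \to 0$, and~(2) dually via the injective hull of $M$.

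For~(1), since $M$ is generated in degree~$0$, the projective cover has the form $P_M = M_0 \otimes_S \Lambda$, a direct sum of \emph{unshifted} indecomposable projectives $e_i\Lambda$. By Remark~\ref{rem:qF} and the hypothesis $\soc(\Lambda)=\Lambda_d$, each $e_i\Lambda$ is concentrated in degrees $0,\dots,d$ with simple socle $(e_i\Lambda)_d$. Thus $P_M$ lies in degrees $0,\dots,d$ with $\soc(P_M)=(P_M)_d$. Since $P_M\to M$ is an isomorphism in degree~$0$, we have $(\Omega M)_0=0$; since $M$ is compressed, $M_d = 0$, giving $(\Omega M)_d=(P_M)_d=\soc(P_M)$. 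Because the socle of any submodule of $P_M$ is contained in $\soc(P_M)$, we conclude $\soc(\Omega M)=\soc(P_M)$ is concentrated in degree~$d$. Hence $\Omega M$ lives in degrees $1,\dots,d$ with socle in degree $d$, and shifting shows $F(M)=(\Omega M)(1)$ is compressed with socle in degree $d-1$, as required.

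For~(2), the indecomposable graded injectives are the shifts $e_i\Lambda(j)$, with $e_i\Lambda(j)$ having socle in degree $d-j$. Since $\soc(M)$ lies in degree $d-1$, the injective hull takes the form $E_M \cong \bigoplus e_{n_i}\Lambda(1)$; this module is concentrated in degrees $-1,\dots,d-1$ and is generated in degree $-1$. In particular, $(E_M)_{d-1}=\soc(E_M)=\soc(M)\subseteq M_{d-1}$, so from $0\to M\to E_M\to \Omega^{-1}M\to 0$ we read off $(\Omega^{-1}M)_{d-1}=0$. Meanwhile $M_{-1}=0$ (as $M$ is compressed), so $E_M\twoheadrightarrow \Omega^{-1}M$ is an isomorphism in degree $-1$; since $E_M$ is generated in degree $-1$, so is $\Omega^{-1}M$. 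Therefore $\Omega^{-1}M$ lives in degrees $-1,\dots,d-2$ and is generated in degree $-1$, making $F^{-1}(M)=(\Omega^{-1}M)(-1)$ compressed and generated in degree $0$.

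The argument is essentially careful bookkeeping of graded degrees, so there is no serious obstacle; the key input in each part is the standard fact (from the Frobenius structure) that every indecomposable projective $e_i\Lambda$ is also injective with simple socle concentrated in the top degree~$d$, which pins down both the projective cover in~(1) and the shape of the injective hull in~(2).
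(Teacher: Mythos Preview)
Your proof is correct and follows essentially the same approach as the paper. For part~(1) your argument is virtually identical to the paper's: analyze the projective cover $P_M$ (a sum of unshifted $e_i\Lambda$'s), use that it is concentrated in degrees $0,\dots,d$ with socle in degree~$d$, and read off the degree range and socle of $\Omega M$. For part~(2) the paper simply says ``dual argument,'' while you spell out the injective-hull version explicitly; your computation is the correct dual and matches what the authors intend.
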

\begin{proof}
(1) Consider the projective cover
\[
0 \to \Omega M \to P \to M \to 0.
\]
Since $M$ is generated in degree zero, $P$ is also generated in degree zero, so $P$ is a direct sum of indecomposable projectives of the form $e_i \Lambda$.  Thus $P$ is concentrated in degrees $0$ through $d$.  Furthermore, the surjection of $P$ onto $M$ is an isomorphism in degree zero by minimality. It follows that $K = \Omega M$ is concentrated in degrees~$1$ through~$d$: $K = K_1 \oplus \cdots \oplus K_d$.  Thus $F(M) = K(1)$ is compressed.  By the assumption $\soc(\Lambda) = \Lambda_d$, every indecomposable projective $e_i \Lambda$ has socle in degree $d$, so $\soc(P) = P_d$.  Then the submodule $K$ must have $\soc(K) = K_d$.  Thus $F(M) = K(1)$ has socle in degree $d-1$.

(2) This follows by a dual argument to (1).
\end{proof}

For the remainder of this section, we will specialize to the case where $\Lambda$ has graded length~$3$ and is Koszul, for which compressed modules have some additional special properties.  First, most indecomposable compressed modules must be generated in degree $0$ and have socle in degree $1$.
\begin{lemma}
\label{lem:compressed-options}
Suppose that $\Lambda = \Lambda_0 \oplus \Lambda_1 \oplus \Lambda_2$ has graded length~3, where $\Lambda_0 = S$ is semisimple and $\soc(\Lambda) = \Lambda_2$. Suppose that $M = M_0 \oplus M_1 \in \rgr \Lambda$ is an indecomposable compressed module. 
Then exactly one of the following holds:
\begin{enumerate}[label=\textnormal{(\roman*)}]
\item $M \cong S_i$ for a simple $\Lambda_0$-module $S_i$;
\item $M \cong S_i(-1)$ for a simple $\Lambda_0$-module $S_i$;
\item $M$ is generated in degree~$0$ and has socle in degree~$1$.
\end{enumerate}
\end{lemma}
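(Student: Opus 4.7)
The plan is to argue by case analysis on the graded supports of $M$, using semisimplicity of $S$ to lift $S$-module decompositions to $\Lambda$-module decompositions, and then invoking indecomposability of $M$ to collapse each decomposition to a single summand. The key observation enabling the lifting is that any $S$-submodule $C \subseteq M_1$ is automatically a $\Lambda$-submodule of $M$: we have $CS \subseteq C$ by construction, while $C\Lambda_{\geq 1} \subseteq M_{\geq 2} = 0$ because $M$ is compressed. A parallel remark shows that any $S$-submodule of $M_0$ killed by $\Lambda_1$ is a $\Lambda$-submodule.

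First I would dispense with the degenerate possibilities. If $M_0 = 0$, then $M = M_1$ is supported entirely in degree $1$ and is annihilated by $\Lambda_{\geq 1}$ for degree reasons, so $M$ is just a semisimple $S$-module placed in degree $1$; indecomposability then forces $M \cong S_i(-1)$, giving case (ii). A symmetric argument handles $M_1 = 0$ and yields case (i).

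For the remaining case I assume both $M_0$ and $M_1$ are nonzero, with the goal of establishing (iii). To prove that $M$ is generated in degree $0$, I would let $N = M_0 \Lambda$ and choose an $S$-module complement $M_1 = N_1 \oplus C$. By the observation above, $C$ is a $\Lambda$-submodule, and a degree-wise check shows $M = N \oplus C$ as $\Lambda$-modules. Since $N \supseteq M_0$ is nonzero, indecomposability forces $C = 0$, and hence $M_1 = N_1 = M_0 \Lambda_1$.

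Finally, to show $\soc(M) \subseteq M_1$, I would set $K = \{x \in M_0 : x\Lambda_1 = 0\}$, which is a $\Lambda$-submodule of $M_0$ (since $K\Lambda_{\geq 1} = 0$) and equals $M_0 \cap \soc(M)$. Choose an $S$-complement $M_0 = K \oplus K'$ and set $N' = K'\Lambda$. Using $K\Lambda_1 = 0$ together with the previous step, one has $N'_1 = K'\Lambda_1 = M_0\Lambda_1 = M_1$, so $N' \supseteq M_1$ is nonzero, and comparing degrees gives $M = K \oplus N'$ as $\Lambda$-modules. Indecomposability now forces $K = 0$, so $\soc(M) \subseteq M_1$. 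The three cases are mutually exclusive since (i) and (ii) force $M$ to be concentrated in a single degree while (iii) requires both $M_0$ and $M_1$ to be nonzero. I do not anticipate any serious obstacle here; the only delicate point is verifying that the $S$-module complements really do form $\Lambda$-submodules, and the compressed hypothesis is exactly what makes this work. Notably, the full strength $\soc(\Lambda) = \Lambda_2$ never seems to enter the argument.
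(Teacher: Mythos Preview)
Your proof is correct and follows the same overall strategy as the paper: take $S$-module complements inside $M_0$ or $M_1$, check that the compressed hypothesis makes them $\Lambda$-submodules, and then invoke indecomposability. The only substantive difference is in the socle step. The paper groups the three pieces $\soc(M)_0$, $W$, $M_1$ as $M = \soc(M) \oplus W$, where $W$ is an $S$-complement of $\soc(M)_0$ in $M_0$; you instead group them as $M = \soc(M)_0 \oplus K'\Lambda$ (with $K' = W$). Your formulation has the advantage that both summands are transparently $\Lambda$-submodules, since $K'\Lambda$ is one by construction and $\soc(M)_0$ is one because it is annihilated by $\Lambda_{\geq 1}$; the price is that you must first use the already-established fact that $M$ is generated in degree~$0$ to conclude $K'\Lambda_1 = M_1$. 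Your closing observation that the hypothesis $\soc(\Lambda) = \Lambda_2$ plays no role in this lemma is also correct.
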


\begin{proof}
Suppose that $M$ does not fall under either of cases~(i) or~(ii). If $M$ is not generated in degree~0, then $N =  M_0 \Lambda$ is a proper submodule of $M$, so that $N_1 \subsetneq M_1$. Taking a $\Lambda_0$-module decomposition 
\[
M_1 = N_1 \oplus V, 
\]
we see that $V$ is in fact a $\Lambda$-submodule of $M$. It follows that $M = N \oplus V$ as $\Lambda$-modules, contradicting indecomposability of $M$. 

Similarly, assume toward a contradiction that $\soc(M) \supsetneq M_1$. Then we may decompose 
\[
M_0 = \soc(M)_0 \oplus W
\]
as $\Lambda_0$-modules. This gives a $\Lambda$-module decomposition $M = \soc(M) \oplus W$, which again contradicts indecomposability.
Thus $M$ satisfies~(iii).
\end{proof}

Finally, we show that indecomposable non-Koszul modules and factor modules of the $F^n(S_j)$ have a very special form.
\begin{lemma}
\label{lem:notkoszul}
\label{lem:special-maps}
Let $\Lambda$ be a graded quasi-Frobenius Koszul ring with $\Lambda_0 = S$ semisimple and $
\soc(\Lambda) = \Lambda_2$.  Let $S = S_1 \oplus S_2 \oplus \dots \oplus S_r$ with each $S_i$ simple.
\begin{enumerate}
    \item If $M \in \rgr \Lambda$ is compressed, indecomposable and not Koszul, then $M \cong F^{-m}(S_i(-1))$ for some $m \geq 0$ and some $i$.
    \item Suppose that $0 \neq M \in \rgr \Lambda$ is indecomposable and that there is an epimorphism $f: F^n(S_j) \to M$ in $\rgr \Lambda$ for some $n \geq 0$.  Then $M \cong F^m(S_i)$ for some $i$ and $0 \leq m \leq n$. 
\end{enumerate}
\end{lemma}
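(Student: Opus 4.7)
The strategy for both parts is to combine the classification of indecomposable compressed modules from Lemma~\ref{lem:compressed-options} with Lemma~\ref{lem:cosyzygy}(2), which makes $F^{-1}$ exact on short exact sequences whose three terms share a common socle degree. A key preliminary observation, used in part~(2), is that $F^n(S_j)$ is compressed with socle concentrated in degree~$1$ for every $n \geq 1$: by Koszulness of $S_j$, $\Omega^n(S_j)$ lies inside the graded radical of the projective $P_{n-1}$ of its minimal resolution, and $P_{n-1}$ is a direct sum of shifts $e_{i_k}\Lambda(-(n-1))$ whose socles all lie in degree $n+1$; therefore $\Omega^n(S_j)$ sits in degrees $n, n+1$ with socle contained in degree $n+1$, and shifting by $n$ gives the claim.

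For part~(2), I would induct on $n$. The base case $n = 0$ is immediate: a nonzero quotient of the simple $S_j$ must equal $S_j = F^0(S_j)$. For $n \geq 1$, both $F^n(S_j)$ and its quotient $M$ are compressed and generated in degree~$0$, so Lemma~\ref{lem:compressed-options} gives either $M \cong S_i$ (take $m = 0$) or $M$ has socle in degree~$1$. In the latter case let $K = \ker f$; all three terms of $0 \to K \to F^n(S_j) \to M \to 0$ have socles in degree~$1$, using the preliminary observation and the fact that $\soc K \subseteq \soc F^n(S_j)$. Since $F^n(S_j)$ has no projective summands (projective indecomposables have graded length~$3$, so cannot be compressed), Lemma~\ref{lem:cosyzygy}(2) together with Lemma~\ref{lem:F-props}(1) yields an epimorphism $F^{n-1}(S_j) \to F^{-1}(M)$ in which $F^{-1}(M)$ is nonzero and indecomposable by Lemma~\ref{lem:F-props}(2). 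The inductive hypothesis then gives $F^{-1}(M) \cong F^{m'}(S_i)$ for some $0 \leq m' \leq n-1$, whence $M \cong F(F^{-1}(M)) = F^{m'+1}(S_i)$.

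For part~(1), Lemma~\ref{lem:compressed-options} presents $M$ as one of $S_i$, $S_i(-1)$, or a module generated in degree~$0$ with socle in degree~$1$. The case $M \cong S_i$ is excluded because the simples are Koszul (being summands of the Koszul module $S = \Lambda_0$); $M \cong S_i(-1)$ gives $m = 0$. For the remaining case, I would induct on
\[
n(M) := \min\{k \geq 0 : F^k(M) \text{ is not generated in degree } 0\},
\]
which is finite precisely because $M$ is non-Koszul and is at least~$1$ because $M$ is generated in degree~$0$. When $n(M) = 1$, Lemmas~\ref{lem:F-props2}(1) and~\ref{lem:F-props}(2) show $F(M)$ is compressed and indecomposable; it is not generated in degree~$0$ and has socle in degree~$1$, so Lemma~\ref{lem:compressed-options} forces $F(M) \cong S_i(-1)$, and Lemma~\ref{lem:F-props}(1) gives $M \cong F^{-1}(S_i(-1))$. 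When $n(M) \geq 2$, the module $F(M)$ is compressed, indecomposable, generated in degree~$0$, and has socle in degree~$1$; since $M$ non-Koszul together with $M$ generated in degree~$0$ forces $F(M)$ non-Koszul, $F(M)$ falls into case~(iii) of Lemma~\ref{lem:compressed-options} and satisfies $n(F(M)) = n(M) - 1$. Induction yields $F(M) \cong F^{-m'}(S_i(-1))$, and thus $M \cong F^{-(m'+1)}(S_i(-1))$.

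The only mildly subtle step is the socle-location claim for $F^n(S_j)$; everything else (preservation of indecomposability and nontriviality, strict decrease of $n(F(M))$) follows directly from the preceding lemmas together with the definition of $n(M)$.
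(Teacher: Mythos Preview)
Your argument is correct and follows essentially the same route as the paper's: both prove (1) by locating the minimal $m$ with $F^m(M)$ not generated in degree~$0$ and using Lemma~\ref{lem:compressed-options} to force $F^m(M) \cong S_i(-1)$, and both prove (2) by induction on $n$, applying $F^{-1}$ in the case where $M$ has socle in degree~$1$ (the paper uses Lemma~\ref{lem:cosyzygy}(1)(a) directly rather than the exact-sequence version, and obtains the socle location for $F^n(S_j)$ from Lemma~\ref{lem:compressed-options} instead of your resolution argument, but these are cosmetic). One small correction: to invoke Lemma~\ref{lem:F-props}(1) for the identification $F^{-1}(F^n(S_j)) \cong F^{n-1}(S_j)$ you need $F^{n-1}(S_j) \in X(\Lambda)$ rather than $F^n(S_j)$, but this holds for the same reason (compressed modules and simple modules have no projective summands since $\soc(\Lambda) = \Lambda_2$).
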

\begin{proof}
(1) Let $m \geq 0$ be minimal such that $\Omega^m M$ is not generated in degree $m$.  Equivalently, $m$ is minimal such that $F^m M$ is not generated in degree $0$.  By Lemma~\ref{lem:F-props}(2), $F^m M$ is indecomposable, and since each $F^i M$ with $0 \leq i < m$ is generated in degree $0$, applying Lemma~\ref{lem:F-props2} inductively we see that $F^m M$ is still compressed.  The only possibility according to Lemma~\ref{lem:compressed-options} is $F^m M\cong S_i(-1)$ for some $i$.  Then $M \cong F^{-m}(S_i(-1))$ for some $i$, by Lemma~\ref{lem:F-props}(1).

(2) If $F^n(S_j)$ is simple, then $F^n(S_j) \cong M$ and the result is obvious.  So assume not; in particular, assume $n > 0$.  We proceed by induction on~$n$.   We know that $F^n(S_j)$ is compressed, indecomposable, and generated in degree $0$ by Lemmas~\ref{lem:F-props} and \ref{lem:F-props2} and the fact that $\Lambda$ is Koszul.  Since $F^n(S_j)$ is not simple, according to the options in Lemma~\ref{lem:compressed-options}, $F^n(S_j)$ is both generated in degree $0$ and has socle in degree~$1$.  Similarly, $M$ is generated in degree $0$, as a epimorphic image of $F^n(S_j)$.  If $M \cong S_i$ for some $i$, we are done by taking $m = 0$.  Otherwise, by Lemma~\ref{lem:compressed-options}, $M$ also has socle in degree $1$.
In this case, since both $F^n(S_j)$ and $M$ have socle in degree $1$, we can apply $F^{-1}$ to the morphism $f$ by Lemma~\ref{lem:cosyzygy}(1), obtaining $F^{-1}(f): F^{n-1}(S_j) \to F^{-1}(M)$, which is still surjective by Lemma~\ref{lem:cosyzygy}(1).  Also, $F^{-1}(M)$ is still indecomposable by Lemma~\ref{lem:F-props}.   By induction we conclude that $F^{-1}(M) \cong F^p(S_i)$ for some $0 \leq p \leq n-1$ and some $i$.  Applying $F$, we have $M \cong F^{p+1}(S_i)$.
\end{proof}

\section{Application to twisted Calabi-Yau algebras}
\label{sec:application}

In this final section we give the main application of our results to prove a certain class of algebras are prime piecewise domains.  

For simplicity we restrict to the setting of an $\N$-graded algebra $A$ over an arbitrary field $k$ such that $S = A_0 = k^r$ is a product of copies of the base field $k$.  We assume that $A$ is a finitely generated $k$-algebra.  Let $1 = e_1 + \dots + e_r$ be the decomposition of $1$ as a sum of primitive orthogonal idempotents.  

The underlying quiver $Q = Q(A)$, as defined in Definition~\ref{def:quivers}, has vertices labeled $\{1, 2, \dots, r \}$ and $\dim_k e_j A_1 e_i$ arrows from $i$ to $j$.  It is well known that there is an isomorphism of graded algebras $A \cong kQ/I$, where the path algebra $kQ$ is graded by path length with right to left composition, and where $I$ is a homogeneous ideal with $I \subseteq kQ_{\geq 2}$.  
Let $B$ be the incidence matrix of $Q$, so $B \in M_r(\mb{Z})$ with $B_{ji} = \dim_k e_j A_1 e_i$ equal to the number of arrows in $Q$ from $i$ to $j$.   For a finitely generated $\mb{Z}$-graded $A$-module $M$, we define the Hilbert series of $M$ as $h_M(t) = \sum (\dim_k M_n) t^n$.  

Suppose that $A$ satisfies all of the properties above and is also Koszul.  Then it is known that $A$ is quadratic, that is $A \cong kQ/I$ with $I = (I_2)$ generated by quadratic relations.  In addition, the Koszul dual $E(A)$ is isomorphic to $A^! = kQ^{op}/(I_2^{\perp})$, where $Q^{op}$ is the opposite quiver and $I_2^{\perp}$ is the orthogonal complement to $I_2$ in a natural sense; see \cite[Theorem 4.1]{MV:selfinjective} for details.

Let us see now how to verify the Koszul syzygy condition for Koszul quasi-Frobenius rings of graded length 3. The \emph{indegree} of a vertex $v$ in a quiver is the number of arrows for which $v$ is their target.

\begin{proposition}
\label{prop:quiver-hs}
    Let $\Lambda = kQ/I$ be a finite-dimensional quasi-Frobenius Koszul algebra with $\soc(\Lambda) = \Lambda_2$.  Let $Q$ have vertex set $\{1, \dots, r \}$ and write $1 = e_1 + \dots + e_r$ where $e_i$ is the trivial path at vertex $i$.  Let $S_i = e_i \Lambda$.  Let $B$ be the incidence matrix of $Q$.  Suppose that every vertex of $Q$ has indegree at least~2. Then:
    \begin{enumerate}
    \item $h_{e_j \Lambda}(t) = 1 + d_j t + t^2$ where $d_j = \sum_i B_{ji} \geq 2$.
    \item For all $n \geq 0$ and all $j$, there exist $a > b$ with $h_{F^n(S_j)}(t) = a + bt$.
    \item For all $n \geq 0$ and all $j$, there exist $a < b$ with $h_{F^{-n}(S_j(-1))}(t) = a + bt$.
    \item $\Lambda$ satisfies the Koszul syzygy condition.
    \end{enumerate}
\end{proposition}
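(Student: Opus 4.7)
Part~(1) is immediate: since $\Lambda$ is quasi-Frobenius with $\soc(\Lambda) = \Lambda_2$, each indecomposable projective $e_j\Lambda$ has one-dimensional socle in degree~$2$, so $h_{e_j\Lambda}(t) = 1 + d_j t + t^2$, where $d_j = \dim_k e_j\Lambda_1 = \sum_i B_{ji}$ is the indegree of vertex~$j$; the hypothesis gives $d_j \geq 2$.

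For~(2), I plan to induct on $n$. The base case is $h_{S_j}(t) = 1$, giving $a_0 = 1 > 0 = b_0$. For the inductive step, write $F^{n-1}(S_j)_0 \cong \bigoplus_i c_i S_i$ with $\sum_i c_i = a_{n-1}$; the projective cover $P = \bigoplus_i c_i\, e_i\Lambda$ of $F^{n-1}(S_j)$ has $h_P(t) = a_{n-1} + (\sum_i c_i d_i)\, t + a_{n-1}\, t^2$. Subtracting to compute the syzygy and shifting by~$1$ yields
\[
h_{F^n(S_j)}(t) = \left(\sum_i c_i d_i - b_{n-1}\right) + a_{n-1}\, t,
\]
so $a_n \geq 2a_{n-1} - b_{n-1} > a_{n-1} = b_n$ using $d_i \geq 2$ from~(1) and the inductive hypothesis. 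For~(3), I would use the dual argument via cosyzygies and injective hulls from Lemma~\ref{lem:cosyzygy}: over a quasi-Frobenius algebra, the injective hull of $\bigoplus_i c'_i S_i(-1)$ is $\bigoplus_i c'_i\, (e_{\nu^{-1}(i)}\Lambda)(1)$, where $\nu$ is the Nakayama permutation. The analogous recursion yields $a_n = b'_{n-1}$ and $b_n = \sum_i c'_i d_{\nu^{-1}(i)} - a'_{n-1}$; since every vertex has indegree at least~$2$, we have $d_{\nu^{-1}(i)} \geq 2$ for all $i$, whence $a_n < b_n$.

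For~(4), the hardest part, fix a nonzero $f : F^n(S_j) \to S_\ell$ and set $K = \ker f$. First I would show that $K$ is generated in degree~$0$. If not, then $K$ has a simple summand $S_i(-1)$, say $K = S_i(-1) \oplus K'$, and $X := F^n(S_j)/K'$ is a length-$2$ quotient of $F^n(S_j)$ fitting in $0 \to S_i(-1) \to X \to S_\ell \to 0$. By Lemma~\ref{lem:special-maps}(2), every indecomposable summand of $X$ has the form $F^m(S_q)$. If the sequence splits, then $S_i(-1)$ itself would have to be of this form, impossible since every $F^m(S_q)$ is generated in degree~$0$. If it does not split, then $X$ is indecomposable of Hilbert series $1 + t$; but by~(2) no $F^m(S_q)$ has this Hilbert series, since $F^0(S_q)$ is $1$-dimensional while for $m \geq 1$ we have $a_m \geq 2$ and $b_m = a_{m-1} \geq 1$. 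Both cases give a contradiction, so $K$ is generated in degree~$0$.

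With $K$ generated in degree~$0$, the horseshoe lemma applied to $0 \to K \to F^n(S_j) \to S_\ell \to 0$ (whose minimal projective covers assemble additively, since the top of $K$ together with $S_\ell$ matches the top of $F^n(S_j)$) yields $0 \to F(K) \to F^{n+1}(S_j) \to F(S_\ell) \to 0$. The plan is then to iterate, showing that each $F^m(K)$ is also generated in degree~$0$, which forces $K$ to be Koszul. The main obstacle is this iteration, since at higher stages the quotient $F^m(S_\ell)$ is no longer simple and the length-$2$ argument above does not directly apply. I expect to handle this by recursively applying Lemma~\ref{lem:special-maps}(2) together with the Hilbert series data from~(2) and~(3), ruling out any $S_i(-1)$-summand of $F^m(K)$ by analyzing how such a simple could sit as a composition factor in a quotient of $F^{n+m}(S_j)$ that must decompose into indecomposables of the form $F^p(S_q)$.
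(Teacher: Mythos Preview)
Your arguments for (1)--(3) match the paper's. The issue is your approach to~(4), which leaves a real gap: you correctly show $K$ is generated in degree~$0$, but then propose to iterate, showing each $F^m(K)$ is generated in degree~$0$. At stage $m \geq 1$ the quotient in your short exact sequence is $F^m(S_\ell)$, no longer simple, and your length-$2$ trick breaks down. You sketch a plan to rule out $S_i(-1)$-summands of $F^m(K)$ by Hilbert series, but notice that the resulting quotient $X$ would sit in $0 \to S_i(-1) \to X \to F^m(S_\ell) \to 0$, giving $h_X = a' + (b'+1)t$ with $a' > b'$ from~(2); this does \emph{not} force $a' \leq b'+1$, so there is no immediate contradiction. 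Your outline does not indicate how to close this.

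The paper bypasses iteration entirely. The key step you are missing is Lemma~\ref{lem:notkoszul}(1): every compressed indecomposable non-Koszul module is isomorphic to $F^{-q}(S_\ell(-1))$ for some $q \geq 0$. So if $K$ is not Koszul, pick a non-Koszul indecomposable summand $K_1$ of $K$ and let $N$ be its complement; then $K_1 \cong F^{-q}(S_\ell(-1))$, whence $h_{K_1} = c + dt$ with $c < d$ by~(3). Now $L := F^n(S_j)/N$ sits in $0 \to K_1 \to L \to S_i \to 0$, and by Lemma~\ref{lem:special-maps}(2) together with~(2) one has $h_L = a + bt$ with $a > b$. The Hilbert series equation $a + bt = 1 + c + dt$ then gives $a = 1 + c \leq d = b$, a contradiction. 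The point is that~(3) is used not on a simple shifted by $-1$ but on the entire non-Koszul summand, and this is what makes the numerics work in one shot.
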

\begin{proof}
(1) $(e_j \Lambda)_1 = \bigoplus_i e_j \Lambda_1 e_i$ is equal to the $k$-span of all arrows from other vertices $i$ to $j$.  So this has dimension $\sum_i B_{ji}$.  Since the indecomposable module $e_i \Lambda$ has simple top and simple socle (as a consequence of Remark~\ref{rem:qF}), we get $h_{\Lambda e_i}(t) = 1 + (\sum_i B_{ji})t + t^2$, as all simple modules are $1$-dimensional over $k$. Now the assumption that every vertex $j$ has indegree at least two implies that $d_j = \sum_i B_{ji} \geq 2$ for all $j$.   

(2) By Lemma~\ref{lem:F-props2}, since $S_j$ is a Koszul module we have $F^n(S_j)$ is compressed for all $n \geq 0$ and so 
$h_{F^n(S_j)}(t) = a_n + b_n t$ for some $a_n, b_n \geq 0$.  We show $a_n > b_n$ for all $n \geq 0$ by induction, the base case being obvious.   Assume $a_n > b_n$ for some $n$.  Since $F^n(S_j)$ has a top consisting of a direct sum of $a_n$ simple modules, if $P \to F^n(S_j)$ is a projective cover then $P$ is a direct sum of $a_n$ indecomposable projectives, so $h_P(t) = a_n + dt + a_n t^2$ with $d \geq 2a_n$ by (1).  Then 
\[
h_{\Omega(F^n(S_j))}(t) = h_P(t) - h_{F^n(S_j)}(t) = (d-b_n)t + a_nt^2, 
\]
and so $h_{F^{n+1}(S_j)}(t) = (d-b_n) + a_n t$.  It follows that 
\[
a_{n+1} = d-b_n \geq 2a_n -b_n = a_n + (a_n -b_n) > a_n = b_{n+1}
\]
as desired.

(3) This is proved dually to (2), since the indecomposable injective modules are also the modules $e_i \Lambda$.

(4) Supppose that the Koszul syzygy condition fails, so there is an exact sequence of the form 
\[
0 \to K \to F^n(S_j) \to S_i \to 0
\]
in which $K$ is not Koszul.  Since $F^n(S_j)$ is compressed, so is $K$.  Consider an indecomposable decomposition $K = K_1 \oplus \dots \oplus K_p$.  At least one summand is not Koszul, and without loss of generality we can take it to be $K_1$.  let $N = K_2 \oplus \dots \oplus K_p$.  Thinking of $N$ as a submodule of $F^n(S_j)$, we now have an exact sequence
\begin{equation}
\label{eq:forcontradiction}
 0 \to K_1 \to F^n(S_j)/N \to S_i \to 0
\end{equation}
where $K_1$ is compressed, indecomposable and not Koszul.  By Lemma~\ref{lem:notkoszul}(1), $K_1 \cong F^{-q}(S_{\ell}(-1))$ for some $q \geq 0$ and $\ell$.  In particular, $h_{K_1}(t) = c + dt$ with $c < d$, by (3).

On the other hand, write $L : = F^n(S_j)/N = L_1 \oplus \dots \oplus L_t$ for some indecomposable modules $L_i$.  Each $L_i$ is isomorphic to some $F^m(S_k)$ with $m \geq 0$ by Lemma~\ref{lem:special-maps}(2).  Thus $H_{L_i}(t) = a_i + b_i t$ with $a_i > b_i$ by (2), and hence $h_L(t) = a + bt$ with $a > b$.

From \eqref{eq:forcontradiction} we obtain 
the relation $1 + c + dt = a + bt$, which leads to the contradiction $a = 1 + c \leq d = b$.
\end{proof}

While the results in Section~\ref{sec:graded QF} only depended on the quasi-Frobenius property, the cases of our main interest satisfy a stronger property.  Following~\cite{DNN:Frobenius}, a finite-dimensional $\mb{N}$-graded $k$-algebra $\Lambda$ is called a \emph{$d$-graded Frobenius algebra} if there is an isomorphism of graded right (equivalently, left) $\Lambda$-modules
\[
\Lambda^* = \Hom_k(\Lambda, k) \cong \Lambda(d).
\]
A $d$-graded Frobenius algebra $\Lambda$ is quasi-Frobenius of graded length $d+1$ and with $\soc(\Lambda) = d$.  The converse is not true in general, 
but does hold in the following special case.
\begin{proposition}\label{prop:path algebra Frobenius}
Suppose that $\Lambda = kQ/I$ is graded self-injective of graded length $d+1$. If $Q$ is connected, then $\Lambda$ is $d$-graded Frobenius and $Q$ is strongly connected.
\end{proposition}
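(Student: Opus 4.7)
The strategy is to reduce the $d$-graded Frobenius property to the claim that every indecomposable graded projective $P_i = e_i\Lambda$ has top degree exactly $d$, and then to exploit the resulting uniformity to derive strong connectedness of $Q$. Write $n_i$ for the top degree of $P_i$ and $m_i$ for the top degree of $\Lambda e_i$, and let $\sigma$ denote the Nakayama permutation determined by $\soc(P_i) \cong S_{\sigma(i)}$; this socle is one-dimensional and sits in degree $n_i$ inside $e_i\Lambda e_{\sigma(i)}$. Decomposing $\Lambda^* = \bigoplus_i (\Lambda e_i)^*$ as a graded right module, each summand is an indecomposable graded injective and hence isomorphic to some $e_{\tau(i)} \Lambda$ shifted so that its top degree equals the top degree $0$ of $(\Lambda e_i)^*$; matching graded pieces with $\Lambda(d) = \bigoplus_i e_i\Lambda(d)$ shows that $\Lambda^* \cong \Lambda(d)$ as graded right modules precisely when $n_i = d$ for all $i$.

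The main step is to establish $n_j \le n_i$ for every arrow $\alpha \colon i \to j$ of $Q$ with $i \ne j$. Left multiplication by $\alpha$ yields a morphism $P_i(-1) \to P_j$ of graded right modules whose image $\alpha\Lambda$ is nonzero and therefore contains the simple socle $\soc(P_j)$. Writing the socle generator of $P_j$ as $\alpha\mu$ for some $\mu$ of degree $n_j - 1$, the inclusion $\alpha\mu \in e_j\Lambda e_{\sigma(j)}$ forces $\mu \in e_i\Lambda e_{\sigma(j)}$; if $n_j - 1 = n_i$ then $\mu$ would lie in the one-dimensional $\soc(P_i) \subseteq e_i\Lambda e_{\sigma(i)}$, forcing $\sigma(i) = \sigma(j)$ and hence $i = j$, contrary to assumption. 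Thus $n_j \le n_i$, and the dual argument applied to right multiplication by $\alpha$ on $\Lambda e_j$ gives $m_i \le m_j$. A short inspection using the QF identity that the left and right socles of $\Lambda$ coincide (so the right socle generator of $P_i$ is simultaneously the left socle generator of $\Lambda e_{\sigma(i)}$) yields $m_j = n_{\sigma^{-1}(j)}$. Combining with the standard fact that the Nakayama functor is a stable auto-equivalence preserving $\dim\Ext^1_\Lambda(S_i, S_j)$, which for a basic algebra equals the number of arrows $i \to j$ in $Q$---so that $\sigma$ acts as an automorphism of the underlying quiver---I apply the main inequality to the arrow $\sigma^{-1}(i) \to \sigma^{-1}(j)$ to get $n_{\sigma^{-1}(j)} \le n_{\sigma^{-1}(i)}$; together with $m_i \le m_j$ rewritten as $n_{\sigma^{-1}(i)} \le n_{\sigma^{-1}(j)}$, this gives equality, so $n_i = n_j$ for every arrow $i \to j$. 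Hence $n$ is constant on undirected connected components of $Q$, and connectedness of $Q$ forces $n_i = d$ for all $i$, so $\Lambda$ is $d$-graded Frobenius.

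For strong connectedness, the socle generator of each $P_i$ is now a nonzero path of length $d$ in $\Lambda$ from $\sigma(i)$ to $i$, realized as a directed path in $Q$. Iterating gives a directed path $\sigma^k(i) \to i$ in $Q$ for every $k \ge 0$, and the finite order of $\sigma$ supplies directed paths in both directions between any two vertices of a single $\sigma$-orbit. For any arrow $\alpha \colon i \to j$ of $Q$, the factorization $\alpha\mu$ of the socle generator of $P_j$ with $\mu$ of degree $d - 1$ provides a directed path in $Q$ from $\sigma(j)$ to $i$ of length $d - 1$; composing with a within-orbit directed path $j \to \sigma(j)$ yields a directed path $j \to i$ reversing $\alpha$. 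Thus every arrow in $Q$ admits a reverse directed path, and undirected connectedness forces strong connectedness. The hardest single ingredient to verify honestly is that the Nakayama permutation is a quiver automorphism of $Q$; this I plan to extract from the Nakayama functor being a stable auto-equivalence preserving $\dim\Ext^1(S_i, S_j)$.
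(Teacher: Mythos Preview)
Your proof is correct and follows a genuinely different route from the paper. The paper's argument is short but relies almost entirely on external citations: it invokes \cite[Theorem~3.3]{MV:selfinjective} to identify the radical and socle series and conclude $\soc(\Lambda) = \Lambda_d$, then a counting argument together with \cite[Proposition~5.4]{DNN:Frobenius} yields the $d$-graded Frobenius property, and strong connectedness is quoted from \cite[Corollary~3.4]{Green:Frobenius}. By contrast, your approach is essentially self-contained: you analyze the top degrees $n_i$ of the indecomposable projectives directly, prove the key inequality $n_j \le n_i$ along each arrow $i \to j$ via the socle factorization $s_j = \alpha\mu$, and then use that the Nakayama permutation is a quiver automorphism to upgrade this to an equality. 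Your strong connectedness argument, producing a reverse directed path for every arrow via the socle factorization and the finite order of $\sigma$, is likewise more explicit than the cited result. A few small points of presentation: when you say ``matching graded pieces'' you really mean matching indecomposable summands via Krull--Schmidt; the passage from $n_{\sigma^{-1}(i)} = n_{\sigma^{-1}(j)}$ to $n_i = n_j$ deserves one extra sentence (applying $\sigma$ to the arrow, using that $\sigma$ permutes arrows); and the ``nonzero path'' is really a nonzero element that witnesses the existence of a path in $Q$. The one genuine external input you flag---that the Nakayama permutation is a quiver automorphism---does follow from the fact that for self-injective $\Lambda$ the Nakayama functor is an honest auto-equivalence of the module category (not merely a stable one), hence preserves $\dim\Ext^1$ between simples; you should state it that way. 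What your approach buys is a transparent, citation-free argument that also makes the mechanism behind strong connectedness visible; what the paper's approach buys is brevity.
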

\begin{proof}
Under the hypotheses, it is proved in~\cite[Theorem~3.3]{MV:selfinjective} that the radical series and the socle series of $\Lambda$ coincide. The radical filtration also coincides with the graded filtration, from which it follows that $\soc(\Lambda) = \Lambda_d$.   Now $\Lambda/J(\Lambda) = S$ is isomorphic to a direct sum of $r$ nonisomorphic simple modules, where $r$ is the number of vertices in $Q$.  Each simple module $S_i$ embeds in $\soc\Lambda(d)$ by Remark~\ref{rem:qF},  and each indecomposable summand of $\Lambda$ has simple socle, so that $\soc \Lambda$ has length~$r$.  It follows that $S \cong \soc \Lambda(d)$.
By \cite[Proposition~5.4]{DNN:Frobenius}, $\Lambda$ is $d$-graded Frobenius. It follows from~\cite[Corollary~3.4]{Green:Frobenius} that $Q$ is strongly connected.
\end{proof}

We can now prove our main results.

\begin{theorem}
\label{thm:quiver-main}
Let $A = kQ/I$ for a homogeneous ideal such that every vertex in $Q$ has outdegree at least~2.  Suppose that $A$ is Koszul and twisted Calabi-Yau of dimension $2$.  Then:
\begin{enumerate}
    \item $A$ is a product of prime piecewise domains. In particular, $A$ is a semiprime piecewise domain.
    \item $A$ is prime if and only if $Q$ is connected, if and only if $Q$ is strongly connected.
\end{enumerate}
\end{theorem}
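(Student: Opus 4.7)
The plan is to reduce the theorem to the Koszul syzygy condition for the Koszul dual $\Lambda = E(A) \cong A^!$ and then invoke the machinery of Sections~\ref{sec:Koszul modules} and~\ref{sec:graded QF}. The preliminary step is to verify that $\Lambda$ is a Koszul graded Frobenius algebra of graded length~3 with $\soc(\Lambda) = \Lambda_2$. This is the Koszul dual incarnation of the twisted Calabi-Yau property of dimension~2 over $S = k^r$, and follows from the standard correspondence between Koszul twisted CY algebras of dimension $d$ and finite-dimensional Koszul graded Frobenius algebras of graded length $d+1$.

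Granting this, the quadratic presentation of a Koszul algebra gives $\Lambda \cong kQ\op/(I_2^{\perp})$, so the underlying quiver of $\Lambda$ is $Q\op$. The outdegree of a vertex $v$ in $Q$ equals its indegree in $Q\op$, so the outdegree hypothesis on $Q$ translates precisely into the indegree-at-least-2 hypothesis of Proposition~\ref{prop:quiver-hs} for $\Lambda$. Part~(4) of that proposition then gives that $\Lambda$ satisfies the Koszul syzygy condition, and Theorem~\ref{thm:Koszul and strongly connected}(1) yields that $A$ is a piecewise domain. For the equivalences in part~(2), Proposition~\ref{prop:path algebra Frobenius} applied to $\Lambda$ shows that if $Q\op$ is connected then it is strongly connected. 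Since both connectedness and strong connectedness are preserved by quiver reversal, this yields $Q$ connected iff $Q$ strongly connected; combined with Theorem~\ref{thm:Koszul and strongly connected}(2), we obtain $A$ prime iff $Q$ strongly connected.

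To finish part~(1), I would handle the decomposition when $Q$ is disconnected. If $Q$ has connected components $Q_1,\dots,Q_m$, then any length-2 path in $kQ$ uses only arrows from a single component, so $I = (I_2)$ splits accordingly and $A \cong A_1 \times \cdots \times A_m$ with $A_i = kQ_i/I_i$. Each $A_i$ inherits the Koszul and twisted CY-2 properties (the bimodule resolution restricts to each factor) as well as the outdegree condition, and has a connected quiver $Q_i$, so the argument above shows each $A_i$ is a prime piecewise domain. Hence $A$ is a product of prime piecewise domains, and in particular semiprime. The main obstacle is the first step of identifying $A^!$ as a graded Frobenius algebra of length~3; this requires external Koszul-duality results for twisted Calabi-Yau algebras (from the sources discussed in the introduction), while the remaining arguments assemble cleanly from the results developed earlier in the paper.
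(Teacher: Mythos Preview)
Your proposal is correct and follows essentially the same approach as the paper's proof: identify $\Lambda = E(A) \cong kQ\op/(I_2^\perp)$ as a Koszul quasi-Frobenius algebra of graded length~3 with $\soc(\Lambda) = \Lambda_2$, translate the outdegree hypothesis on $Q$ to the indegree hypothesis on $Q\op$, apply Proposition~\ref{prop:quiver-hs}(4) for the Koszul syzygy condition, and conclude via Theorem~\ref{thm:Koszul and strongly connected} and Proposition~\ref{prop:path algebra Frobenius}. The only differences are cosmetic: the paper first reduces to connected $Q$ (citing \cite{RR:generalized} for the decomposition and the persistence of the twisted CY property on factors) and then establishes the Frobenius property via self-injectivity from \cite{LiWu:Koszul} together with Proposition~\ref{prop:path algebra Frobenius}, whereas you invoke the general Koszul CY/Frobenius correspondence up front and handle the decomposition last.
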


\begin{proof}
If $Q$ is not connected, we may partition it into connected components $Q_j$ to obtain $kQ = kQ_1 \oplus \cdots \oplus kQ_p$. Note that each $Q_i$ still satisfies the condition that every vertex is the source of at least two arrows. As in~\cite[Lemma~2.7]{RR:generalized}, this leads to a decomposition $A = A_1 \oplus \cdots \oplus A_p$ for indecomposable algebras $A_j \cong kQ_j/I_j$. Each $A_j$ is again twisted Calabi-Yau of dimension $2$, by~\cite[Proposition~4.6]{RR:generalized}. Certainly $A$ is certainly not prime if $p \geq 2$.  Thus we may replace $Q$ by a connected component, and it suffices to prove that in this case $A$ is a prime piecewise domain and $Q$ is strongly connected. 

Let $S = A_0$, and denote $\Lambda = \Ext_{A \lMod}^\bullet(S,S)$.  Since $\on{gl.dim}(A) = 2$, $\Lambda = \Lambda_0 \oplus \Lambda_1 \oplus \Lambda_2$ has graded length $3$.  The algebra $A$ is also generalized AS regular by \cite[Theorem 1.5]{RR:generalized}, in the sense defined there.  It follows from~\cite[Theorem~1.5]{LiWu:Koszul} that $\Lambda \cong kQ^{op}/(I_2^{\perp})$ is self-injective. As $Q\op$ is also connected, by Proposition~\ref{prop:path algebra Frobenius} $\Lambda$ is $2$-graded Frobenius and $Q\op$ is strongly connected, so $Q$ is also strongly connected.  In particular, $\soc(\Lambda) = \Lambda_2$ and since every vertex in $Q$ is the source of at least two arrows, every vertex in $Q\op$ is the target of at least two arrows.  Now by Proposition~\ref{prop:quiver-hs}(4), $\Lambda$ satisfies the Koszul syzygy condition.  So $A$ is a prime piecewise domain by Theorem~\ref{thm:Koszul and strongly connected}.
\end{proof}

Some important concrete instances of CY-2 algebras are the preprojective algebras. The result above specializes as follows.
The \emph{degree} of a vertex $v$ in a quiver $\Gamma$ means its degree in the underlying graph of $\Gamma$, which is equal to the sum of the indegree and outdegree of $v$.

\begin{corollary}\label{cor:preprojective}
If $\Gamma$ is a (connected) quiver in which every vertex has degree at least~2, then the preprojective algebra $\Pi(\Gamma)$ is a (prime) piecewise domain.
\end{corollary}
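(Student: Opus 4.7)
The plan is to realize the preprojective algebra in the form demanded by Theorem~\ref{thm:quiver-main} and then invoke that theorem directly. Recall that $\Pi(\Gamma)$ is by definition $k\overline{\Gamma}/I$, where $\overline{\Gamma}$ is the double quiver obtained from $\Gamma$ by adjoining, for each edge $e$ of the underlying graph, a pair of opposite arrows $a_e, a_e^*$, and $I$ is the ideal generated by the preprojective (mesh) relation $\sum_e (a_e a_e^* - a_e^* a_e)$. Thus $\Pi(\Gamma)$ is already presented as $kQ/I$ with $Q = \overline{\Gamma}$, so condition on the quiver needed for Theorem~\ref{thm:quiver-main} must be checked for $\overline{\Gamma}$ rather than for $\Gamma$ itself.

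First I would observe the combinatorial point: for any vertex $v$ of $\overline{\Gamma}$, the outdegree in $\overline{\Gamma}$ equals the degree of $v$ in the underlying graph of $\Gamma$, because each incident edge of $\Gamma$ contributes exactly one arrow of $\overline{\Gamma}$ with source $v$ (either the original arrow at $v$ or its formal opposite, depending on the orientation in $\Gamma$). Consequently, the hypothesis that every vertex of $\Gamma$ has degree at least $2$ is exactly the statement that every vertex of $\overline{\Gamma}$ has outdegree at least $2$, which is precisely the hypothesis of Theorem~\ref{thm:quiver-main}.

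Next, I would cite the well-known homological properties of preprojective algebras: $\Pi(\Gamma)$ is Koszul and twisted Calabi-Yau of dimension $2$. These facts are standard in the literature on preprojective algebras and their deformations (and are exactly the framework in which~\cite{RR:generalized} places the algebras $kQ/I$ considered in Theorem~\ref{thm:quiver-main}). With these two inputs, Theorem~\ref{thm:quiver-main} applies verbatim to $\Pi(\Gamma) = k\overline{\Gamma}/I$, yielding that $\Pi(\Gamma)$ is a semiprime piecewise domain, and prime if and only if $\overline{\Gamma}$ is (strongly) connected. Since connectedness of $\overline{\Gamma}$ as a directed graph is equivalent to connectedness of $\Gamma$ as an undirected graph (each edge of $\Gamma$ gives arrows in both directions, so the underlying undirected graph of $\overline{\Gamma}$ is just a doubling of that of $\Gamma$), the final equivalence drops out immediately.

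The only substantive obstacle I foresee is a bookkeeping issue rather than a mathematical one: pinpointing the cleanest citation for ``preprojective algebras of arbitrary connected quivers are Koszul and twisted Calabi-Yau of dimension $2$'' in the non-Dynkin and infinite-dimensional generality in which this corollary is stated. Once that citation is in place, the proof is a one-line reduction to Theorem~\ref{thm:quiver-main}, with the parenthetical strengthening ``prime'' coming from the added connectedness hypothesis on $\Gamma$ via the equivalence in part~(2) of that theorem.
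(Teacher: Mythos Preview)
Your proposal is correct and follows essentially the same approach as the paper: reduce to Theorem~\ref{thm:quiver-main} by presenting $\Pi(\Gamma) = k\overline{\Gamma}/I$, translating the degree-$\geq 2$ hypothesis on $\Gamma$ into the outdegree-$\geq 2$ hypothesis on $\overline{\Gamma}$, and invoking the Koszul and 2-Calabi-Yau properties of preprojective algebras. The paper resolves your anticipated citation issue by pointing to~\cite[Corollary~4.3]{BrennerButlerKing} for Koszulity and to a Hilbert series argument via~\cite{EtingofEu, Minamoto} and~\cite[Lemma~7.6]{RR:growth} for the Calabi-Yau property.
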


\begin{proof}
Preprojective algebras are Koszul rings by~\cite[Corollary~4.3]{BrennerButlerKing}. It is also well-known that they are 2-Calabi-Yau. (For instance, this can be deduced by a Hilbert series argument, combining~\cite{EtingofEu, Minamoto} and~\cite[Lemma~7.6]{RR:growth}.) 
The assumption on $\Gamma$ ensures that every vertex of the double quiver $Q = \overline{\Gamma}$ is the source (and target) of at least two arrows. Since the preprojective algebra has a graded presentation of the form $\Pi(\Gamma) \cong k\overline{\Gamma}/I$, the result now follows from Theorem~\ref{thm:quiver-main}.
\end{proof}

We close with some examples and remarks about our main results.

\begin{remark}
If $kQ/I$ is twisted Calabi-Yau of dimension $2$, then the incidence matrix $B$ of $Q$ satisfies $B = P B^T$ for some permutation matrix $P$ \cite[Proposition 7.1]{RR:growth}.  From this one can see that the hypothesis in Theorem~\ref{thm:quiver-main} that every vertex of $Q$ has outdegree at least~2 is equivalent to the requirement that every vertex has indegree at least~2.  
\end{remark}

\begin{example}
The algebra $A = k \langle x, y \rangle/(xy)$ is the standard example of a connected graded Koszul algebra of global dimension $2$ which is not AS regular.   The Koszul dual $E(A) = A^!$ is non-Frobenius and does not satisfy the Koszul syzygy condition, as can be checked directly.  Of course $A \cong \Ext^\bullet_{E(A)}(k,k)$ is obviously not a domain.  Similar examples exist in dimension $3$.  Thus if the methods presented here are to be applied to prove that a Koszul AS~regular algebra is a domain, we expect the Frobenius property of the Ext algebra to play an essential role in deducing the Koszul syzygy condition.
\end{example}

\begin{remark}
\label{rem:not-pd}
Suppose that $A = kQ/I$ is Koszul twisted Calabi-Yau of dimension $2$, but $Q$ has a vertex which is not the source (equivalently, target) of two arrows.  Then in fact $A$ is not a piecewise domain. To see this, let $\Lambda = A^! \cong kQ^{\op}/(I_2^{\perp})$. By Proposition~\ref{prop:quiver-hs}, some indecomposable projective $e_j \Lambda$ has Hilbert series $1 + t + t^2$.  This means that $F(S_j)$ has Hilbert series $1 + t$, and so there is a simple object $S_i$ and a surjective map $F(S_j) \to S_i$ whose kernel is isomorphic to $S_{\ell}(-1)$ for some $\ell$.  Thus the Koszul syzygy condition fails for $\Lambda$ and 
hence $A = \Lambda^!$ is not a piecewise domain by Theorem~\ref{thm:Koszul and strongly connected}; in fact, some product of two arrows is a relation. (For an explicit example of such an algebra, one can take $A$ to be the preprojective algebra on a quiver of type $\widetilde{D}_n$.)
\end{remark}

The preceding example shows that Theorem~\ref{thm:quiver-main} is the best possible result about the piecewise domain property for Koszul twisted CY algebras of dimension $2$.  On the other hand, we conjecture that all indecomposable twisted CY algebras are prime.  Primeness for some twisted CY-2 algebras and preprojective algebras has been established by other methods in some cases:

\begin{example} 
The class of tame orders studied in~\cite{RVdB} includes the case of PI twisted Calabi-Yau algebras $kQ/I$ of quadratic growth, although they were not described in that language.  These are shown to be prime in \cite[Proposition 2.15]{RVdB}.  For such examples the incidence matrix of $Q$ has spectral radius exactly $2$ (see \cite[Theorem 7.8]{RR:growth}).

For a more explicit example, for any extended Dynkin quiver $\Gamma$, the preprojective algebra $k\overline{\Gamma}/I$ is Calabi-Yau, and is known to be prime by these earlier results.  On the other hand, $Q = \overline{\Gamma}$ has two arrows leaving each vertex only if $\Gamma$ is of type $\wt{A}_n$, but not for the $\wt{D}$- and $\wt{E}$-types. Such preprojective algebras were proved to be prime even earlier in~\cite[Lemma~6.1]{BGL:preprojective}.  It would be interesting to investigate the more subtle zero-divisor structure of these algebras using the methods of our paper, and see if it leads to an alternative proof of primeness.
\end{example}

Finally, we discuss how the specific results in~\cite{Guo:prime} are related to our work.
The result~\cite[Theorem 3.2]{Guo:prime} gives conditions under which an orbital algebra is prime; the method we use in Theorem~\ref{thm:Guo piecewise} is modeled after this result, although Guo does not consider the piecewise domain property.  Guo uses his result to claim that all self-injective Koszul algebras $\Lambda$ satisfying an additional condition on the underlying quiver have prime Ext algebras \cite[Theorem 4.1]{Guo:prime}, and as a consequence that all Artin-Schelter regular algebras are prime \cite[Theorem 4.2]{Guo:prime}.   The proof of \cite[Theorem 4.1]{Guo:prime} does not appear to be correct, as it seems to assume that one can treat the syzygy operation $\Omega$ as a functor on the entire category $\mathcal{C}$ of $\Lambda$-modules without projective summands, and that as such it is an equivalence of categories and so preserves epimorphisms.   As we discussed in Section~\ref{sec:Koszul modules}, $\Omega$ may not have a well-defined action on morphisms between modules when they are not generated in the same single degree.  Moreover, Lemma~\ref{lem:samedegree} shows that the condition under which an epimorphism is preserved by $\Omega$ is subtle.

In fact, if a noetherian AS regular algebra is prime, then it must also be a domain.  This folklore result follows easily from a graded version of the argument in \cite[Theorem 3.2]{BHM}, which was given for local rings.  So if \cite[Theorem 4.2]{Guo:prime} were correct, it would have proved that noetherian Koszul AS regular algebras are domains.

We believe that the conjecture that AS regular algebras are domains is still completely open. Unfortunately, we do not yet see how to use the methods of this paper even to reproduce the known result from \cite[Section 4]{ATV2} that the conjecture holds for noetherian AS regular algebras of dimension 3 and 4.  Furthermore, to our knowledge the literature contains little information about  the piecewise domain or prime condition for the more general class of Koszul twisted Calabi-Yau algebras $kQ/I$, even in  dimension 3.

\appendix 

\section{Side conventions and Ext algebras}
\label{sec:appendix}

The proof of Theorem~\ref{thm:Koszul and strongly connected} involves a passage between left and right Ext algebras. While this paper is written in such a way as to minimize such technicalities, some choices must be made in order to make the statement precise. This short appendix explains our conventions on Ext algebras of left modules and how they interface with those of~\cite{BGS}.

We follow the custom that homomorphisms of modules over a ring $R$ act from the opposite side of the $R$-scalars. Thus, morphisms of left modules act on the right of the module and are composed from left to right. 
One pleasant consequence of this is that the endomorphism ring of the free left module ${}_R R^n$, viewed as a space of row vectors, is the matrix ring $\mathbb{M}_n(R)$ acting from the right by ordinary matrix multiplication. 
In general, composition takes the explicit form
\begin{align*}
\Hom({}_R L,{}_R M) \times \Hom({}_R M,{}_R P) &\to \Hom({}_R L,{}_R P), \\
(f, g) &\mapsto fg,
\end{align*}
where if $f$ and $g$ act by $x \mapsto (x)f$ and $y \mapsto (y)g$, then $fg$ acts by $x \mapsto ((x)f)g$.
On the level of categories, this means when identifying left $R$-modules with right $R\op$-modules, we must also reverse the order of composition so that
\[
R \lMod \cong (\rMod R\op)\op.
\]
Despite appearances, this has quite natural consequences. For instance, consider the $R$-dual functors 
$\Hom_{R \lMod}(-,R)$ and $\Hom_{\rMod R}(-,R)$ acting on the full subcategories of finitely generated free modules. On objects, they interchange free left modules of row vectors with free right modules of column vectors. But for morphisms, they act identically on the spaces of $n \times m$ matrices over $R$, preserving the order of matrix multiplication. 

This practice has subtle consequences for the Ext algebra of a left module.
The usual homological constructions apply without alteration for the category $\rMod A$ since we still compose from the left. But for the left module category, the ``double opposite'' convention above means that we have a ring isomorphism
\[
\Ext^\bullet({}_A M, {}_A M) = \Ext^\bullet(M_{A\op}, M_{A\op})\op.
\]
While this initially seems disconcerting and perhaps unnatural, it can be explained in the following way. Let $0 \leftarrow M \leftarrow P^\bullet$ be a projective resolution of $M$ by left $A$-modules, where the differentials of $P^\bullet$ naturally act from the right. Then $P^\bullet$ is a left DG $A$-module. Our conventions mean that it is also a \emph{right} DG module over the DG ring $E = \Hom({}_A P^\bullet, {}_A P^\bullet)$ (using the internal hom of DG modules, as in~\cite[\S 3.4]{Yekutieli}). Thus $P^\bullet$ is actually a DG $(A,E)$-bimodule, and our Ext conventions simply mean that
\[
\Ext^\bullet({}_A M, {}_A M) \cong H^\bullet(E).
\]

To see one concrete benefit of this convention, let $A$ be a connected graded Koszul algebra with quadratic dual $A^!$. These conventions allow us to have the left module and right module versions of the Yoneda algebra isomorphic to the same quadratic dual:
\[
\Ext^\bullet({}_A k, {}_A k) \cong A^! \cong \Ext^\bullet(k_A, k_A).
\]
Under the ordinary conventions, the Ext algebra of ${}_A k$ would be the \emph{opposite} of the quadratic dual and thus the opposite of the Ext algebra of $k_A$.

Finally, suppose that $R$ is a graded ring with $R_0 = S$ semisimple. We can view $S = R/R_{\geq 1}$ as a trivial left or right module, and form two Ext algebras $E_l(R) = \Ext^\bullet_{R \lMod}(S,S)$ and $E_r(R) = \Ext^\bullet_{\rMod R}(S,S)$ according to the conventions above. 
Assume as in Section~\ref{sec:Koszul modules} that $R$ is a graded Koszul ring for which $V := R_1$ is finitely generated as both a left and a right $S$-module, so that $R$ is \emph{left} and \emph{right finite} in the sense of~\cite{BGS}. Then $R$ is a quadratic ring, with a presentation of the form $R \cong T_S(V)/(L)$ for an $(S,S)$-subbimodule of relations $L \subseteq V \otimes_S V$.
As in~\cite[Section~2]{BGS}, we can use the left and right $S$-dual functors $(-)^\vee := \Hom_{S \lMod}(-,S)$ and  ${}^\vee (-) := \Hom_{\rMod S}(-,S)$ along with orthogonal bimodules of relations $L^\perp \subseteq V^\vee \otimes_S V^\vee \cong (V \otimes_S V)^\vee$ and ${}^\perp L \subseteq {}^\vee V \otimes_S {}^\vee V \cong {}^\vee (V \otimes_S V)$ to define the left and right \emph{quadratic dual} rings $R^! := T_S(V^\vee)/(L^\perp)$ and ${}^! R := T_S({}^\vee V)/({}^\perp L)$,
which satisfy $R \cong {}^!(R^!) \cong ({}^! R)^!$.
Then translating~\cite[Theorem~2.10.1]{BGS} into the current conventions, we get $E_l(R) \cong R^!$ and $E_r(R) \cong {}^! R$.
Collecting these observations, we arrive at 
\[
E_r(E_l(R)) \cong {}^!(R^!) \cong R \quad \mbox{and} \quad E_l(E_r(R)) \cong ({}^! R)^! \cong R,
\]
with the first isomorphism being~\eqref{eq:Koszul dual}.

\bibliography{koszul-domain}

\end{document}